\documentclass[11pt, one side, article]{memoir}

\settrims{0pt}{0pt} 
\settypeblocksize{*}{35pc}{*} 
\setlrmargins{*}{*}{1} 
\setulmarginsandblock{.98in}{.98in}{*} 
\setheadfoot{\onelineskip}{2\onelineskip} 
\setheaderspaces{*}{1.5\onelineskip}{*} 
\checkandfixthelayout

\usepackage{amsthm}
\usepackage{mathtools}

\usepackage[inline]{enumitem}
\usepackage[utf8]{inputenc} 
\usepackage{xcolor}

\usepackage[backend=biber, backref=true, maxbibnames = 10, style = alphabetic]{biblatex}
\usepackage[bookmarks=true, colorlinks=true, linkcolor=blue!50!black,
citecolor=orange!50!black, urlcolor=orange!50!black, pdfencoding=unicode]{hyperref}
\usepackage[capitalize]{cleveref}

\usepackage{tikz}

\usepackage{amssymb}
\usepackage{newpxtext}
\usepackage[varg,bigdelims]{newpxmath}
\usepackage{mathrsfs}
\usepackage{dutchcal}
\usepackage{mathalfa}
\usepackage{stmaryrd}
\usepackage{graphicx}

  \crefformat{enumi}{\card#2#1#3}
  \crefalias{chapter}{section}

  \hypersetup{final}

  \setlist{nosep}
  \setlistdepth{6}

  \usetikzlibrary{ 
  	cd,
  	math,
  	decorations.markings,
		decorations.pathreplacing,
  	positioning,
  	arrows.meta,
  	shapes,
  	calc,
  	fit,
  	quotes
  }

\tikzset{
	oriented WD/.style={
		every to/.style={out=0,in=180,draw},
    label/.style={
    	font=\everymath\expandafter{\the\everymath\scriptstyle},
      inner sep=0pt,
      node distance=2pt and -2pt},
    semithick,
    node distance=1 and 1,
    decoration={markings, mark=at position \stringdecpos with \stringdec},
    ar/.style={postaction={decorate}},
    execute at begin picture={\tikzset{
    	x=\bbx, y=\bby,
      every fit/.style={inner xsep=\bbx, inner ysep=\bby}}}
    },
    string decoration/.store in=\stringdec,
    string decoration={\arrow{stealth};},
    string decoration pos/.store in=\stringdecpos,
    string decoration pos=.7,
    bbx/.store in=\bbx,
    bbx = 1.5cm,
    bby/.store in=\bby,
    bby = 1.5ex,
    bb port sep/.store in=\bbportsep,
    bb port sep=1.5,
    bb port length/.store in=\bbportlen,
    bb port length=4pt,
    bb penetrate/.store in=\bbpenetrate,
    bb penetrate=0,
    bb min width/.store in=\bbminwidth,
    bb min width=1cm,
    bb rounded corners/.store in=\bbcorners,
    bb rounded corners=2pt,
	dot size/.store in=\dotsize,
	dot size=2pt,
	dot/.style={
		circle, draw, thick, inner sep=0, fill=black, minimum width=\dotsize
	},
    bb spider/.style={
    	bb port sep=1, bb port length=10pt, bbx=.4cm, bb min width=.4cm, bby=.8ex},
    bb small/.style={
    	bb port sep=1, bb port length=2.5pt, bbx=.4cm, bb min width=.4cm, bby=.7ex},
		bb medium/.style={
			bb port sep=1, bb port length=2.5pt, bbx=.4cm, bb min width=.4cm, bby=.9ex},
    bb/.code 2 args={
    	\pgfmathsetlengthmacro{\bbheight}{\bbportsep * (max(#1,#2)+1) * \bby}
      \pgfkeysalso{draw,minimum height=\bbheight,minimum
       width=\bbminwidth,outer sep=0pt,
         rounded corners=\bbcorners,thick,
         prefix after command={\pgfextra{\let\fixname\tikzlastnode}},
         append after command={\pgfextra{\draw
            \ifnum #1=0{} \else foreach \i in {1,...,#1} {
            	($(\fixname.north west)!{\i/(#1+1)}!(\fixname.south west)$) +(-\bbportlen,0) coordinate (\fixname_in\i) -- +(\bbpenetrate,0) coordinate (\fixname_in\i')}\fi 
            \ifnum #2=0{} \else foreach \i in {1,...,#2} {
            	($(\fixname.north east)!{\i/(#2+1)}!(\fixname.south east)$) +(-
\bbpenetrate,0) coordinate (\fixname_out\i') -- +(\bbportlen,0) coordinate (\fixname_out\i)}\fi;
           }}}
		},
			bb name/.style={
     	append after command={
				\pgfextra{\node[anchor=north] at (\fixname.north) {#1};}
			}
		}
  }

\newcommand{\adj}[5][30pt]{
\begin{tikzcd}[ampersand replacement=\&, column sep=#1]
  #2\ar[r, shift left=5pt, "#3"]
  \ar[r, phantom, "\scriptstyle\Rightarrow"]\&
  #5\ar[l, shift left=5pt, "#4"]
\end{tikzcd}
}

\usepackage{aliascnt}

\theoremstyle{definition}
\newtheorem{definitionx}{Definition}[chapter]

\newaliascnt{examplex}{definitionx}
\newtheorem{examplex}[examplex]{Example}
\aliascntresetthe{examplex}
\crefname{examplex}{Example}{Examples}

\newaliascnt{remarkx}{definitionx}
\newtheorem{remarkx}[remarkx]{Remark}
\aliascntresetthe{remarkx}
\crefname{remarkx}{Remark}{Remarks}

\newaliascnt{notation}{definitionx}

\aliascntresetthe{notation}
\crefname{notation}{Notation}{Notations}

\theoremstyle{plain}

\newaliascnt{theorem}{definitionx}
\newtheorem{theorem}[theorem]{Theorem}
\aliascntresetthe{theorem}
\crefname{theorem}{Theorem}{Theorems}

\newaliascnt{proposition}{definitionx}
\newtheorem{proposition}[proposition]{Proposition}
\aliascntresetthe{proposition}
\crefname{proposition}{Proposition}{Propositions}

\newaliascnt{corollary}{definitionx}
\newtheorem{corollary}[corollary]{Corollary}
\aliascntresetthe{corollary}
\crefname{corollary}{Corollary}{Corollaries}

\newaliascnt{lemma}{definitionx}

\aliascntresetthe{lemma}
\crefname{lemma}{Lemma}{Lemmas}

\newaliascnt{warning}{definitionx}

\aliascntresetthe{warning}
\crefname{warning}{Warning}{Warnings}

\newtheorem*{theorem*}{Theorem}
\newtheorem*{proposition*}{Proposition}
\newtheorem*{corollary*}{Corollary}
\newtheorem*{lemma*}{Lemma}
\newtheorem*{warning*}{Warning}

\newenvironment{example}
  {\pushQED{\qed}\examplex}
  {\popQED\endexamplex}
  
 \newenvironment{remark}
  {\pushQED{\qed}\remarkx}
  {\popQED\endremarkx}
  
  \newenvironment{definition}
  {\pushQED{\qed}\definitionx}
  {\popQED\enddefinitionx}

\DeclareSymbolFont{stmry}{U}{stmry}{m}{n}
\DeclareMathSymbol\fatsemi\mathop{stmry}{"23}

\DeclareFontFamily{U}{mathx}{\hyphenchar\font45}
\DeclareFontShape{U}{mathx}{m}{n}{
      <5> <6> <7> <8> <9> <10>
      <10.95> <12> <14.4> <17.28> <20.74> <24.88>
      mathx10
      }{}
\DeclareSymbolFont{mathx}{U}{mathx}{m}{n}
\DeclareFontSubstitution{U}{mathx}{m}{n}
\DeclareMathAccent{\widecheck}{0}{mathx}{"71}

\ExplSyntaxOn
\NewDocumentEnvironment{sequation}{O{\fontsize{15pt}{15pt}\selectfont
}b}
 {
  \yufip_sequation:nnn {equation}{#1}{#2}
 }{}
\NewDocumentEnvironment{sequation*}{O{\fontsize{16pt}{16pt}\selectfont
}b}
 {
  \yufip_sequation:nnn {equation*}{#1}{#2}
 }{}
\cs_new_protected:Nn \yufip_sequation:nnn
 {
  \begin{#1}
  \mbox{#2$\displaystyle#3$}
  \end{#1}
 }
\ExplSyntaxOff

\renewcommand{\ss}{\subseteq}

\DeclarePairedDelimiter{\copair}{(}{)}

\DeclarePairedDelimiter{\code}{\ulcorner}{\urcorner}
\DeclarePairedDelimiter{\ihom}{[}{]}
\DeclarePairedDelimiter{\sem}{\llbracket}{\rrbracket}

\DeclareMathOperator{\ob}{Ob}

\newcommand{\const}[1]{\texttt{#1}}
\newcommand{\Set}[1]{\mathsf{#1}}
\newcommand{\cat}[1]{\mathcal{#1}}
\newcommand{\Cat}[1]{\mathbf{#1}}
\newcommand{\fun}[1]{\mathrm{#1}}
\newcommand{\Fun}[1]{\mathsf{#1}}

\newcommand{\id}{\mathrm{id}}
\newcommand{\then}{\mathbin{\fatsemi}}

\newcommand{\To}[2][]{\xrightarrow[#1]{\tn{$#2$}}}

\newcommand{\card}{\,^{\#}}

\newcommand{\tn}[1]{\textnormal{#1}}
\newcommand{\ol}[1]{\overline{#1}}

\newcommand{\nn}{\mathbb{N}}

\newcommand{\UU}{\mathbb{U}}

\newcommand{\smset}{\Cat{Set}}
\newcommand{\smcat}{\Cat{Cat}}

\newcommand{\List}{\Fun{list}}

\newcommand{\yon}{{\mathcal{y}}}
\newcommand{\poly}{\Cat{Poly}}
\newcommand{\polytree}{\Cat{PolyTr}_\UU}

\newcommand{\tri}{\mathbin{\triangleleft}}

\newcommand{\cofree}{\mathfrak{c}}

\newcommand{\uu}{\mathrm{u}}
\newcommand{\Cmd}{\Cat{Cmd}}
\newcommand{\coh}[1]{^{(#1)}}
\newcommand{\tc}[1]{\mathtt{#1}}
\newcommand{\proot}{.\mathtt{root}}
\newcommand{\prest}{.\mathtt{rest}}
\newcommand{\ppos}{.\mathtt{pos}}
\newcommand{\pdir}{.\mathtt{dir}}

\newcommand{\biglens}[2]{
     \begin{bmatrix}{\vphantom{f_f^f}#2} \\ {\vphantom{f_f^f}#1} \end{bmatrix}
}
\newcommand{\littlelens}[2]{
     \begin{bsmallmatrix}{\vphantom{f}#2} \\ {\vphantom{f}#1} \end{bsmallmatrix}
}
\newcommand{\lens}[2]{
  \relax\if@display
     \biglens{#1}{#2}
  \else
     \littlelens{#1}{#2}
  \fi
}

\newcommand{\hh}[2][]{#1 \tn{#2} #1}
\newcommand{\qqand}{\hh[\qquad]{and}}

\newcommand{\coalg}{\tn{-}\Cat{Coalg}}
\newcommand{\org}{\mathbb{O}\Cat{rg}}
\newcommand{\orgtree}{\mathbb{O}\Cat{rgTr}}

\newcommand{\thanksAFOSR}[1]{This material is based upon work supported by the Air Force Office of Scientific Research under award number #1}

\allowdisplaybreaks
\setsecnumdepth{section}
\settocdepth{section}
\begin{document}

\title{Interactions that reshape the interfaces of the interacting parties}

\author{David I. Spivak\thanks{\thanksAFOSR{FA9550-23-1-0376}.}}

\date{\vspace{-.2in}}

\maketitle

\begin{abstract}
Polynomial functors model systems with interfaces: each polynomial specifies the outputs a system can produce and, for each output, the inputs it accepts. The bicategory $\mathbb{O}\mathbf{rg}$ of dynamic organizations \cite{spivak2021learners} gives a notion of state-driven interaction patterns that evolves over time, but each system's interface remains fixed throughout the interaction. Yet in many systems, the outputs sent and inputs received can reshape the interface itself: a cell differentiating in response to chemical signals gains or loses receptors; a sensor damaged by its input loses a channel; a neural network may grow its output resolution during training.

Here we introduce \emph{polynomial trees}, elements of the terminal $(\uu\tri\uu)$-coalgebra where $\uu$ is the universe polynomial, to model such systems: a polynomial tree is a coinductive tree whose nodes carry polynomials, and in which each round of interaction---an output chosen and an input received---determines a child tree, hence the next interface. We construct a monoidal closed category $\mathbf{PolyTr}$ of polynomial trees, with coinductively-defined morphisms, tensor product, and internal hom. We then build a bicategory $\mathbb{O}\mathbf{rgTr}$ generalizing $\mathbb{O}\mathbf{rg}$, whose hom-categories parametrize morphisms by state sets with coinductive action-and-update data. We provide a locally fully faithful functor  $\mathbb{O}\mathbf{rg}\to\mathbb{O}\mathbf{rgTr}$ via constant trees, those for which the interfaces do not change through time. We illustrate the generalization by suggesting a notion of progressive generative adversarial networks, where gradient feedback determines when the image-generation interface grows to a higher resolution.
\end{abstract}

\chapter{Introduction}

Wiring diagrams are a common way to represent information flow, e.g.\ block diagrams in control theory \cite{baez2015categories,bonchi2014categorical}, compartmental models in epidemiology \cite{libkind2022algebraic}, and many other graphical formalisms for compositional systems \cite{selinger2010survey,Spivak:2013b,Vagner.Spivak.Lerman:2015a}.
\[
\begin{tikzpicture}[oriented WD, bb min width =.5cm, bbx=.5cm, bb port sep =1,bb port length=0, bby=.15cm]
	\node[bb={2}{2}, green!25!black] (X11) {};
	\node[bb={3}{3}, green!25!black, below right=of X11] (X12) {};
	\node[bb={2}{1}, green!25!black, above right=of X12] (X13) {};
	\node[bb={2}{2}, green!25!black, below right = -1 and 1.5 of X12] (X21) {};
	\node[bb={1}{2}, green!25!black, above right=-1 and 1 of X21] (X22) {};
  \node[bb={2}{2}, fit = {($(X11.north east)+(-1,2)$) (X12) (X13) ($(X21.south)$) ($(X22.east)+(.5,0)$)}] (Z) {};
	\draw (X21_out1) to (X22_in1);
	\draw let \p1=(X22.north east), \p2=(X21.north west), \n1={\y1+\bby}, \n2=\bbportlen in
          (X22_out1) to[in=0] (\x1+\n2,\n1) -- (\x2-\n2,\n1) to[out=180] (X21_in1);
	\draw (X11_out1) to (X13_in1);
	\draw (X11_out2) to (X12_in1);
	\draw (X12_out1) to (X13_in2);
	\draw (Z_in1'|-X11_in2) to (X11_in2);	
	\draw (Z_in2'|-X12_in2) to (X12_in2);
	\draw (X12_out2) to (X21_in2);
	\draw (X21_out2) to (Z_out2'|-X21_out2);
	 \draw let \p1=(X12.south east), \p2=(X12.south west), \n1={\y1-\bby}, \n2=\bbportlen in
	  (X12_out3) to[in=0] (\x1+\n2,\n1) -- (\x2-\n2,\n1) to[out=180] (X12_in3);
	\draw let \p1=(X22.north east), \p2=(X11.north west), \n1={\y2+\bby}, \n2=\bbportlen in
          (X22_out2) to[in=0] (\x1+\n2,\n1) -- (\x2-\n2,\n1) to[out=180] (X11_in1);
	\draw (X13_out1) to (Z_out1'|-X13_out1);
\end{tikzpicture}
\]
Each box has an \emph{interface}---a collection of input and output ports---and the wiring specifies how outputs of one box flow to inputs of another, and how that whole structure implements a box at a higher level of abstraction.

But these wiring diagrams are \emph{static}, in contrast to many real-world settings, where the interaction pattern is \emph{dynamic}. That is, the stuff that flows between entities can affect the very channels by which they flow: water carves a deeper channel for itself; a ``disconnect'' signal causes the disconnection of a signal-carrying wire. In training an artificial neural network, the accuracy of the informational transformation cascading through a model causes changes in the connection strengths (weights) constituting that very transformation.

This phenomenon---interaction patterns that reconfigure in response to what flows through them---has been formalized as \emph{dynamic categorical structures}: categories, operads, and monoidal categories enriched in the bicategory $\org$ of dynamic organizations \cite{shapiro2022dynamic}. In this framework, each box is assigned a fixed polynomial interface $p\in\poly$ (a functor $\sum_{i: I}\yon^{A_i}\colon\smset\to\smset$, where $I$ is the set of \emph{positions} and $A_i$ is the set of \emph{directions} at position $i$; see \cref{sec.prelim}), and the wiring between boxes is governed by a coalgebra whose state evolves over time. Applications include prediction markets, where trader successes reconfigure the wealth distribution, as well as gradient descent, where prediction error reconfigure a model's weight parameters.

However, in many situations of interest it is not only the wiring that changes but the interfaces themselves. A neuron can die; a sensor can be damaged, losing an input channel; an organism can gain a prosthetic limb, acquiring new capabilities. In all these cases, the signals flowing through the system modify the very shapes of the boxes they flow between. None of this can be captured by the framework of \cite{shapiro2022dynamic}, in which each box retains its fixed interface for all time.

The present paper develops a more general categorical framework accommodating this phenomenon. We replace each fixed polynomial with a \emph{polynomial tree}: a coinductive tree in which each node carries a polynomial, with a branch for each position and direction of that polynomial. Such a tree encodes an interface that evolves: after each round of interaction, the interface may transition to a different polynomial, and the process continues coinductively. In particular, an interface can die (transition to the zero polynomial) or divide, or undergo any other structural change, all determined by what flows to and from the interface over the course of the interaction.

The technical engine is a \emph{universe polynomial} $\uu\coloneqq\sum_{I\in\UU}\yon^{|I|}$, where $\UU$ is a fixed universe of cardinalities (e.g.\ the finite cardinalities). Its self-substitution $\uu\tri\uu$ is a polynomial whose positions encode polynomials with arities from $\UU$: a position in $\tc{p}:\uu\tri\uu(1)$ consists of a $\UU$-set of positions together with a $\UU$-set of directions for each. Polynomial trees are then elements of the terminal $(\uu\tri\uu)$-coalgebra, i.e.\ coinductive trees in which each node carries such an encoded polynomial.

Our main construction is a category $\polytree$ whose objects are polynomial trees and whose morphisms are coinductively-defined systems of polynomial maps, one at each node along every possible interaction path. We show that $\polytree$ has:
\begin{itemize}
\item a symmetric monoidal product $\otimes$, extending parallel composition of interfaces to the evolving setting;
\item an internal hom $\ihom{-,-}$, making $\polytree$ monoidal closed;
\item a coproduct $+$ distributed over by $\otimes$.
\end{itemize}
The full subcategory $\poly_\UU\ss\poly$ of polynomials with arities from $\UU$ embeds faithfully into $\polytree$ via \emph{constant trees}, which repeat the same polynomial at every node. We also develop a $\smcat$-enriched version of $\polytree$, in which a morphism $p\to q$ is specified by a state set in $\UU$ together with a coinductive system of polynomial-map--valued actions and state updates. The resulting bicategory $\orgtree$ generalizes $\org$: for constant polynomial trees, the hom categories of $\org$ embed fully faithfully into the hom-categories of $\orgtree$. Enrichment in $\orgtree$ yields dynamic categories, operads, and monoidal categories in which both the wiring \emph{and} the interfaces co-evolve.

\paragraph{Plan of the paper.} In \cref{sec.prelim} we review polynomial functors and their monoidal structures. In \cref{sec.polytree} we define polynomial trees as elements of the cofree $(\uu\tri\uu)$-coalgebra and construct the category $\polytree$ with its coinductively-defined morphisms, tensor product, and monoidal closed structure. In \cref{sec.poly_polytree} we study the relationship between $\poly$ and $\polytree$, construct the bicategory $\orgtree$ generalizing $\org$, and illustrate it with an example from progressive deep learning.

\chapter{Preliminaries}\label{sec.prelim}

We briefly review the category $\poly$ of polynomial functors in one variable on $\smset$ and some relevant structures; see \cite{niu2025polynomial} for a comprehensive treatment and \cite{spivak2022reference} for a concise reference.

\section{Polynomial functors and their maps}\label{subsec.poly}

As in the introduction, a \emph{polynomial functor} is an endofunctor $p\colon\smset\to\smset$ of the form $\sum_{i: I}\yon^{A_i}$, where $I$ is the set of positions and $A_i$ is the set of directions at position $i$. Since $p(1)=\sum_{i:I}1=I$, we can avoid inventing a variable and use the compact notation $p(1)$ for the position set and $p[i]\coloneqq A_i$ for the direction set at $i: p(1)$, so that
\begin{equation}\label{eqn.poly}
p=\sum_{i: p(1)}\yon^{p[i]}=\sum_{i: p(1)}\;\prod_{d: p[i]}\yon.
\end{equation}
As in high school algebra, we use juxtaposition for scalar multiplication: $pq\coloneqq p\times q$, and we write $1=\yon^0$. A \emph{monomial} is a polynomial of the form $M\yon^N=\sum_{m:M}\yon^N$; we refer to $\yon^N$ as a \emph{representable} and to $M=M\yon^0$ as a \emph{constant}.

A morphism $\varphi\colon p\to q$ in $\poly$ (i.e.\ a natural transformation of the corresponding functors) is equivalently a map forward on positions and backward on directions:
\begin{equation}\label{eqn.poly_map}
\poly(p,q)=\prod_{i: p(1)}\;\sum_{j: q(1)}\;\prod_{e: q[j]}\; \sum_{d: p[i]}1.
\end{equation}
By the \emph{type-theoretic axiom of choice},%
\footnote{The type-theoretic axiom of choice states that products distribute over sums,
\[\prod_{a:A}\sum_{b:B(a)}C(a,b)\cong\sum_{b:\prod_{a:A}B(a)}\prod_{a:A}C(a,b(a)).\]That is, a dependent product of dependent sums is equivalently a sum over choice functions of the original dependent product. Unlike the set-theoretic axiom of choice, this is provable in type theory; see \cite[Theorem 2.15.7]{Voevodsky:2013a}.}
the outer $\prod_i\sum_j$ yields a function $\varphi_1\colon p(1)\to q(1)$, and the inner $\prod_e\sum_d$ yields, for each $i:p(1)$, a function $\varphi^\#_i\colon q[\varphi_1(i)]\to p[i]$. The subscript $\varphi_1$ records the component of the natural transformation $\varphi$ at $1:\smset$. We say $\varphi$ is \emph{cartesian} if each $\varphi^\#_i$ is an isomorphism.%
\footnote{The term \emph{cartesian} is surprisingly robust. The map $\varphi$ is cartesian if its naturality squares are pullbacks, iff $\varphi$ corresponds to a pullback square connecting the associated bundles, iff $\varphi$ is cartesian in the sense of the Grothendieck fibration $\poly\to\smset$ sending $p\mapsto p(1)$.}

\section{Monoidal structures on $\poly$}\label{subsec.monoidal}

In this section we recall the monoidal structures on $\poly$ that will be lifted to $\polytree$.

The category $\poly$ has all coproducts and products: for any family $(p_\alpha)_{\alpha: A}$ of polynomials,
\begin{equation}\label{eqn.sums_products}
\Big(\sum_{\alpha: A} p_\alpha\Big)(1)=\sum_{\alpha: A} p_\alpha(1),\qquad \Big(\prod_{\alpha: A} p_\alpha\Big)(1)=\prod_{\alpha: A} p_\alpha(1)
\end{equation}
with directions $(\sum_{\alpha: A} p_\alpha)[(\alpha,i)]=p_\alpha[i]$ and $(\prod_{\alpha: A} p_\alpha)[(i_\alpha)_{\alpha: A}]=\sum_{\alpha: A} p_\alpha[i_\alpha]$. The initial polynomial $0$ has no positions and the terminal polynomial $1=\yon^0$ has one position and no directions. In the binary case we write $p+q$ and $p\times q$.

Beyond sums and products, $\poly$ carries two additional monoidal structures relevant to the present paper.

\paragraph{Dirichlet product and closure.}
For polynomials $p,q$, their \emph{Dirichlet tensor product} is
\begin{equation}\label{eqn.dirichlet}
(p\otimes q)(1)\coloneqq p(1)\times q(1),\qquad (p\otimes q)[(i,j)]\coloneqq p[i]\times q[j].
\end{equation}
The unit is $\yon=\yon^1$. Both $\times$ and $\otimes$ distribute over $+$. The Dirichlet product is symmetric monoidal and closed: the internal hom is
\begin{equation}\label{eqn.dirichlet_hom}
\ihom{p,q}=\sum_{\varphi: \poly(p,q)}\yon^{\sum_{i: p(1)}q[\varphi_1(i)]}=\prod_{i: p(1)}\;\sum_{j: q(1)}\;\prod_{e: q[j]}\;\sum_{d: p[i]}\yon
\end{equation}
satisfying $\poly(r\otimes p,\, q)\cong\poly(r,\,\ihom{p,q})$ naturally in $r$.

Note that $\ihom{p,q}(1)=\poly(p,q)$, as is immediate from comparing \eqref{eqn.dirichlet_hom} with \eqref{eqn.poly_map}. A direction at $\varphi:\ihom{p,q}(1)$ consists of a pair $(i,e)$, where $i:p(1)$ is a $p$-position and $e:q[\varphi_1(i)]$ is a $q$-direction. But note that this data also implicitly specifies positions and directions of both polynomials: we have the a $q$-position $j\coloneqq\varphi_1(i)$ and we also have a $p$-direction $d\coloneqq\varphi_i^\sharp(e)$. 

\begin{remark}\label{rmk.wiring_diagram}
Wiring diagrams can be used to represent polynomial maps, from a tensor product of ``inner'' boxes to a single ``outer'' box. Each monomial of the form $W\yon^X$ is drawn as a box 
$
\begin{tikzpicture}[oriented WD, bb small, font=\tiny, baseline=(n.-30)]
	\node[bb={1}{1}] (n) {};
	\node[left=-1pt of n_in1] {X};
	\node[right=-1pt of n_out1] {W};
\end{tikzpicture}
$, which outputs values in $W$ and inputs values in $X$, where $W,X:\smset$.
Then a wiring diagram like the following
\[
\varphi\coloneqq
\begin{tikzpicture}[oriented WD, bb min width =.7cm, bby=1.6ex, bbx=.7cm,bb port length=3pt, baseline=(X2)]
  \node[bb port sep=1.6, bb={2}{2}, bb name=$p_1$] (X1) {};
  \node[bb port sep=.8,bb={1}{1}, right=.7 of X1_out1, bb name=$p_2$] (X2) {};
  \node[bb={0}{0}, fit={(X1) (X2) ($(X1.north)+(0,2)$)}, bb name={$q$}] (Y) {};
  \coordinate (Y_in1) at (Y.west|-X1_in2);
  \coordinate (Y_out1) at (Y.east|-X1_out2);
  \draw (Y_in1) to node[above, font=\tiny]{$B$} (X1_in2);
  \draw (X1_out1) to node[above, font=\tiny]{$C$} (X2_in1);
  \draw (X1_out2) -- node[below, font=\tiny]{$D$} (Y_out1);
  \draw[ar] let \p1=(X2.north east), \p2=(X1.north west), \n1={\y2+\bby}, \n2=\bbportlen in
          (X2_out1) to[in=0] node[right, pos=.5, font=\tiny]{$A$} (\x1+.7*\n2,\n1) -- (\x2-.7*\n2,\n1) to[out=180] node[left, pos=.5, font=\tiny]{$A$} (X1_in1);
\end{tikzpicture}
\]
represents a map $\varphi\colon p_1\otimes p_2\to q$, where $p_1=CD\yon^{AB}$, $p_2=A\yon^C$, and $q=D\yon^B$. Indeed, to specify a map
\[
CDA\yon^{ABC}\cong p_1\otimes p_2\To{\varphi} q=D\yon^B
\]
one needs a function $\varphi_1\colon CDA\to D$ and a function $\varphi^\sharp\colon CDAB\to ABC$; we use the projection for each.\footnote{Wiring diagrams represent those polynomial maps between monomials $W\yon^X$ that include only projection and diagonal maps.} 
In \cref{sec.polytree}, we will generalize this idea to \emph{polynomial trees}, where the interfaces of the boxes $p_1,p_2,q$ may change after each round of interaction---determined by what flows through the wires (the element of $CDAB$)---and the interaction pattern $\varphi$ must adapt accordingly.
\end{remark}

\paragraph{Substitution product.}
The \emph{substitution} (or \emph{composition}) \emph{product} is defined by $(p\tri q)(S)\coloneqq p(q(S))$, with unit $\yon$. Using the expanded form of \eqref{eqn.poly}:
\begin{equation}\label{eqn.substitution}
p\tri q=\sum_{i: p(1)}\;\prod_{d: p[i]}\;\sum_{j: q(1)}\;\prod_{e: q[j]}\yon.
\end{equation}
Thus a position of $p\tri q$ is a pair $(i,j)$ where $i: p(1)$ and, by the type-theoretic axiom of choice, $j\colon p[i]\to q(1)$, assigning a $q$-position $j_d:q(1)$ to each direction $d:p[i]$ at $i$. The direction set is $(p\tri q)[(i,j)]=\sum_{d:p[i]}q[j_d]$, so a direction is a pair $(d,e)$ of directions, $d: p[i]$ and $e: q[j_d]$.

\section{The cofree comonad}\label{subsec.cofree}

In this section we recall the cofree comonad construction, which will provide the coinductive framework for polynomial trees.

A \emph{$\tri$-comonoid} in $\poly$ is a comonoid for the substitution product: a polynomial $c$ equipped with a counit $\epsilon\colon c\to\yon$ and comultiplication $\delta\colon c\to c\tri c$ satisfying coassociativity and counitality. Since the substitution product is composition of endofunctors on $\smset$, a $\tri$-comonoid is equivalently a (polynomial) comonad on $\smset$. Let $\Cmd(\poly)$ denote the category of $\tri$-comonoids and their morphisms.

\begin{proposition}[{\cite{libkind2024pattern}}]\label{prop.cofree}
The forgetful functor $U\colon\Cmd(\poly)\to\poly$ has a right adjoint $\cofree\colon\poly\to\Cmd(\poly)$,
\[
\adj{\Cmd(\poly)}{U}{\cofree}{\poly}
\]
called the \emph{cofree comonad}. For any polynomial $p$, it is constructed as follows. Define polynomials $p\coh{i}$ for $i:\nn$ by
\begin{equation}\label{eqn.cofree}
p\coh{0}\coloneqq\yon
\qqand
p\coh{1+i}\coloneqq\yon\times(p\tri p\coh{i})
\end{equation}
with projection maps $\pi\coh{i}\colon p\coh{i+1}\to p\coh{i}$, where $\pi\coh{0}$ is the terminal map and $\pi\coh{i+1}\coloneqq\yon\times(p\tri\pi\coh{i})$. Then $\cofree_p\coloneqq\lim_i\, p\coh{i}$.
\end{proposition}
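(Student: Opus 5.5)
Since $p\tri-$ preserves connected limits (in particular $\omega^{\mathrm{op}}$-limits, as any polynomial functor does) and $\yon\times-$ does too, we may compute $\cofree_p=\lim_i p\coh{i}$ explicitly and verify the needed structure directly. We first describe it concretely. By induction, a position of $p\coh{n}$ is a $p$-tree of height $n$: a root position $i:p(1)$ together with, for each $d:p[i]$, a position of $p\coh{n-1}$. Its directions are the paths of length $<n$ from the root, where $\yon\times-$ contributes the empty path. The map $\pi\coh{n}$ truncates trees and restricts directions along inclusion of paths. Since limits in $\poly$ are computed by a limit on positions and a colimit on directions, $\cofree_p(1)$ is the set of infinite $p$-trees, and $\cofree_p[t]$ is the set of finite rooted paths in $t$.

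Next we equip $\cofree_p$ with comonoid structure. The counit $\epsilon\colon\cofree_p\to\yon$ picks the empty path. The comultiplication $\delta\colon\cofree_p\to\cofree_p\tri\cofree_p$ sends a tree $t$ to $(t,\,\gamma\mapsto t_\gamma)$, where $t_\gamma$ is the subtree at the end of $\gamma$. On directions it sends $(\gamma,\gamma')$ to the concatenation $\gamma\gamma'$. Coassociativity reduces to $(t_\gamma)_{\gamma'}=t_{\gamma\gamma'}$ and associativity of concatenation. Counitality reduces to $t_{()}=t$ and the fact that the empty path is a unit for concatenation. There is also a map $\varepsilon_p\colon\cofree_p\to p$ that takes the root position and sends $d$ to the length-one path $(d)$; this will serve as the counit of the adjunction.

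For the universal property, let $c$ be a comonoid and $f\colon c\to p$ a polynomial map. We construct $F\colon c\to\cofree_p$ as a cone over the maps $F\coh{n}\colon c\to p\coh{n}$, defined inductively by
\[
F\coh{0}=\epsilon,\qquad F\coh{n+1}=\bigl(\epsilon,\;(f\tri F\coh{n})\circ\delta\bigr).
\]
Compatibility with the $\pi\coh{n}$ follows by induction. Concretely, $F$ sends $x:c(1)$ to the tree whose root is $f_1(x)$ and whose child at $d$ is the tree of $\mathrm{cod}(f^\sharp_x(d))$, where codomains are taken in the category associated with $c$. On directions, $F$ sends a path $(d_1,\dots,d_n)$ to the composite of the successive $c$-morphisms $f^\sharp(d_k)$.

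Three things then remain to check:
\begin{enumerate*}[label=(\roman*)]
\item $F$ is a comonoid morphism, which follows from the functoriality of composition in $c$, i.e.\ from coassociativity and counitality of $c$;
\item $\varepsilon_p\circ F=f$, which holds by construction;
\item $F$ is unique.
\end{enumerate*}
Uniqueness is the main obstacle. The plan is to show by induction on $n$ that any comonoid map $G$ with $\varepsilon_p\circ G=f$ satisfies $\pi_n\circ G=F\coh{n}$. The key lemma is that $\delta$-compatibility forces the equation
\[
\pi_{n+1}\circ G=\bigl(\epsilon,\;(\varepsilon_p\tri\pi_n G)\circ\delta\bigr).
\]
This is proved by checking that $\cofree_p$'s own structure satisfies $\pi_{n+1}=(\epsilon,\,(\varepsilon_p\tri\pi_n)\circ\delta)$ and then precomposing with $G$. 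Given the lemma, the induction closes, and the universal property of the limit gives $G=F$. Naturality in $c$ and $p$ then yields the adjunction $U\dashv\cofree$.
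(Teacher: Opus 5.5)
The paper gives no proof of this proposition: it is imported from \cite{libkind2024pattern}, so there is no in-paper argument to compare with. Your proof is the standard direct verification, and it is correct. You describe the limit concretely as trees, give the comonoid structure by subtrees and path concatenation, build the universal map by unfolding along codomains in the category corresponding to $c$, and prove uniqueness by induction up the tower. Your description of $\cofree_p(1)$ and $\cofree_p[t]$ is exactly the one the paper uses right after the statement.

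A few things to tidy.

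\textbf{Path lengths.} The directions of $p\coh{n}$ are the rooted paths of length $\le n$, not $<n$. Already $p\coh{0}=\yon$ has one direction (the empty path), and $p\coh{1}=\yon\times p$ has $1+p[i]$ directions at $i$. The union over $n$ is still all finite paths, so nothing downstream changes.

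\textbf{The key lemma.} As written it does not typecheck. Precompose $\pi_{n+1}=(\epsilon,(\varepsilon_p\tri\pi_n)\circ\delta)$ with a comonoid map $G$, where $\epsilon,\delta$ are the structure maps of $\cofree_p$. Using $\epsilon\circ G=\epsilon_c$, $\delta\circ G=(G\tri G)\circ\delta_c$, and bifunctoriality of $\tri$, you get
\[
\pi_{n+1}\circ G=\bigl(\epsilon_c,\,(\varepsilon_p G\tri\pi_n G)\circ\delta_c\bigr)=\bigl(\epsilon_c,\,(f\tri\pi_n G)\circ\delta_c\bigr).
\]
It is this form that the inductive hypothesis $\pi_n\circ G=F\coh{n}$ turns into $F\coh{n+1}$.

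\textbf{Base cases.} Both $\pi\coh{0}\circ F\coh{1}=F\coh{0}$ and $\pi_0\circ G=\epsilon_c$ (via $\pi_0=\epsilon$) require $\pi\coh{0}\colon\yon\times p\to\yon$ to be the product projection. That is what your truncation description uses. The statement's phrase ``terminal map'' is loose: $\yon$ is terminal in $\Cmd(\poly)$ but not in $\poly$, where maps $\yon\times p\to\yon$ are far from unique.

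\textbf{Connected limits.} Your opening appeal to preservation of connected limits is never used, since every verification is carried out on the concrete model.
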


Thus the position set $\cofree_p(1)$ consists of \emph{$p$-behavior trees}: coinductively, a $p$-behavior tree $t$ has a \emph{root position} $t\proot: p(1)$ and a \emph{child-tree function} $t\prest\colon p[t\proot]\to\cofree_p(1)$, assigning a child tree $t\prest_d$ to each direction $d:p[t\proot]$. The direction set $\cofree_p[t]=1+\sum_{d:p[t\proot]}\cofree_p[t\prest_d]$ is the set of nodes of $t$: either the root node, or a root direction $d:p[t\proot]$ followed by a node of $t\prest_d$.

Recall that a \emph{coalgebra} for the endofunctor $p\colon\smset\to\smset$ is a set $S$ equipped with a structure map $\beta\colon S\to p(S)$. The set $\cofree_p(1)$ of $p$-behavior trees is the \emph{terminal $p$-coalgebra}: for any $p$-coalgebra $(S,\beta)$, there is a unique map $S\to\cofree_p(1)$ sending each state to its coinductive behavior tree. This universal property will be used in \cref{sec.polytree} onward.

\section{Universe polynomials}\label{subsec.universe}

Fix a \emph{universe} $\UU$ of cardinalities, closed under singletons, dependent sums, and dependent products, i.e.\ a \emph{$\Pi\Sigma$-universe} in the sense of Martin-L\"of type theory \cite{martin1975intuitionistic}. Any Grothendieck universe gives such a $\UU$, as does the universe of finite cardinalities. For each cardinality $I\in\UU$, write $|I|$ for a canonical set of that cardinality. Define the \emph{universe polynomial}
\begin{equation}\label{eqn.universe_poly}
\uu_\UU\coloneqq\sum_{I\in\UU}\yon^{|I|}
\end{equation}
so $\uu(1)=\UU$ and $\uu[I]=|I|$. For example, when $\UU$ is the universe of finite cardinalities, $\uu_\UU$ is the \emph{list polynomial} $\List\coloneqq\sum_{N:\nn}\yon^N$. We will generally drop the subscript and write simply $\uu$.

Since $\UU$ is closed under singletons and dependent sums, $\uu$ carries the structure of a \emph{cartesian} monad in $(\poly,\yon,\tri)$: the unit $\eta\colon\yon\to\uu$ picks out the singleton, and the multiplication $\mu\colon\uu\tri\uu\to\uu$ sends a $\UU$-set of $\UU$-sets to their dependent sum; cartesianness enforces this (see end of \cref{subsec.poly}). In the finite case, these are singleton list and concatenation of lists, e.g.\ $\mu$ sends a list of natural numbers $(N_1,\ldots,N_K)$ to their sum $N_1+\cdots+N_K$, with the canonical bijection on directions. Closure under dependent products will be used starting in \cref{sec.polytree}.

The composite $\uu\tri\uu$ will be central to this paper: its positions encode polynomials with arities from $\UU$. By \eqref{eqn.substitution}, a position $\tc{w}=(I,D): (\uu\tri\uu)(1)$ consists of $I:\uu(1)$ and $D:\uu[I]\to\uu(1)$, i.e.\ a cardinality $I$ together with a $\UU$-indexed family of cardinalities. This data encodes the polynomial
\begin{equation}\label{eqn.realization}
\sem{\tc{w}}\coloneqq\sum_{i:|I|}\yon^{|D_i|}:\poly.
\end{equation}
The direction set $(\uu\tri\uu)[\tc{w}]=\sum_{i:\uu[I]}\uu[D_i]=\sum_{i:|I|}|D_i|$ (again by \eqref{eqn.substitution}) consists of position-direction pairs $(i,d)$ of $\sem{\tc{w}}$, where $i:|I|$ and $d:|D_i|$.

We say a polynomial $p=\sum_{i:p(1)}\yon^{p[i]}:\poly$ \emph{has arities from $\UU$} if $p(1)=|I|$ and $p[i]=|D_i|$ for (necessarily unique) elements $I,D_i:\uu(1)$, and write $\poly_\UU\ss\poly$ for the full subcategory of such polynomials. For any $p:\poly_\UU$, define its \emph{code} $\code{p}\coloneqq(I,\,D):(\uu\tri\uu)(1)$ where $I$ and $D$ are the unique elements with $|I|=p(1)$ and $|D_i|=p[i]$. Then $\sem{\cdot}$ and $\code{\cdot}$ are inverse bijections between $(\uu\tri\uu)(1)$ and $\ob(\poly_\UU)$: $\sem{\code{p}}=p$ and $\code{\sem{(I,D)}}=(I,D)$. Every polynomial whose position and direction sets have cardinalities represented in $\UU$ is (noncanonically) isomorphic to one in $\poly_\UU$.

\chapter{The category $\polytree$ of polynomial trees}\label{sec.polytree}

We now define polynomial trees---coinductive trees of evolving polynomial interfaces---and use them as the objects of a category $\polytree$.

\section{Polynomial trees}\label{subsec.polytrees}

In this section we define polynomial trees as elements $\tc{p}:\cofree_{\uu\tri\uu}(1)$ of the terminal $(\uu\tri\uu)$-coalgebra and describe their coinductive structure.

By \eqref{eqn.substitution}, a position $\tc{w}:(\uu\tri\uu)(1)$ has two components: $\tc{w}\ppos:\uu(1)$ and $\tc{w}\pdir:\uu[\tc{w}\ppos]\to\uu(1)$, a position set $|\tc{w}\ppos|:\smset$ and a direction assignment. Together these encode the polynomial $\sem{\tc{w}}=\sum_{i:|\tc{w}\ppos|}\yon^{|\tc{w}\pdir_i|}:\poly$, cf.\ \eqref{eqn.realization}, with direction set $(\uu\tri\uu)[(\tc{w}\ppos,\tc{w}\pdir)]=\sum_{i:|\tc{w}\ppos|}|\tc{w}\pdir_i|$, all directions of $\sem{\tc{w}}$.

\begin{definition}\label{def.polytree}
A \emph{polynomial tree} is an element $\tc{p}:\cofree_{\uu\tri\uu}(1)$, the terminal $(\uu\tri\uu)$-coalgebra (\cref{subsec.cofree}). Coinductively, $\tc{p}$ consists of:
\begin{itemize}
\item a \emph{root code} $\tc{p}\proot=(\tc{p}\ppos,\,\tc{p}\pdir): (\uu\tri\uu)(1)$, with $\tc{p}\ppos:\uu(1)$ and $\tc{p}\pdir:\uu[\tc{p}\ppos]\to\uu(1)$, encoding the \emph{root polynomial} $\sem{\tc{p}\proot}=\sum_{i:|\tc{p}\ppos|}\yon^{|\tc{p}\pdir_i|}:\poly$; together with
\item $\tc{p}\prest\colon(\uu\tri\uu)[\tc{p}\proot]\to\cofree_{\uu\tri\uu}(1)$, assigning a child tree $\tc{p}\prest_{i,d}:$ to each direction $(i,d):(\uu\tri\uu)[\tc{p}\proot]$.\qedhere
\end{itemize}
\end{definition}

A polynomial tree encodes an interface that evolves through interaction. The root polynomial $\sem{\tc{p}\proot}:\poly$ is the current interface; when position $i:\tc{p}\ppos$ is selected and direction $d:|\tc{p}\pdir_i|$ is received, the interface transitions to $\sem{\tc{p}\prest_{i,d}\proot}$, and the process continues. The tree terminates along any branch where $\tc{p}\proot=N\yon^0$ is constant.

\begin{example}[Communication protocols]\label{ex.protocol}
A polynomial tree can specify a communication protocol whose available operations change over time. At each node, positions represent the message types a client may send, and directions represent the server's possible responses.

For instance, consider a server initially offering $\const{login}$ (with responses $\const{success}$ or $\const{failure}$) and $\const{quit}$ (no response): the root polynomial is $\yon^2+1$. After $\const{login}/\const{success}$, the interface transitions to $\Set{String}\yon^{\Set{String}}+\Set{String}\yon^{\Set{Int}}+1$: the authenticated session offers $\const{query}$ (sending a query $\Set{String}$, returning a result $\Set{String}$), $\const{set}$ (a write operation sending a $\Set{String}$ and returning an $\Set{Int}$ status code), and $\const{logout}$ (terminating the session). After $\const{login}/\const{failure}$, it returns to $\yon^2+1$ for a retry. The polynomial tree encodes the full protocol, with the coinductive structure capturing unbounded interaction. This perspective is closely related to \emph{session types} \cite{honda1993types}.\qedhere
\end{example}

\begin{example}[Cell differentiation]\label{ex.cell}
A cell's interface changes as it differentiates. Let $L$ be a finite set of concentration levels, large enough to include all relevant amounts of any signaling molecule. Model a stem cell that outputs a concentration $\ell:L$ of growth factor and receives signals simultaneously on three receptors, each detecting a concentration in $L$. The root polynomial is $L\yon^{L^3}:\poly$, with $|\tc{p}\ppos|=L$ and $|\tc{p}\pdir_x|=L^3$ for each $x:L$. A direction $d=(\ell_1,\ell_2,\ell_3):L^3$ is a triple of concentrations, one per receptor. The child tree depends on this input: for example, high $\ell_1$ might trigger $\sem{\tc{p}\prest_{x,d}\proot}=L\yon^{L}$ (differentiation to a neuron that outputs neurotransmitter, one receptor), high $\ell_2$ might give $\sem{\tc{p}\prest_{x,d}\proot}=L\yon^{L^3}$ (growth signal, interface unchanged), and high $\ell_3$ might give $\sem{\tc{p}\prest_{x,d}\proot}=0$ (apoptosis). The neuron subtree might be constant at $L\yon^L$; the growth subtree could equal $\tc{p}$ itself coinductively.\qedhere
\end{example}

\begin{example}[Chess]\label{ex.chess}
A game of chess is naturally modeled as a polynomial tree. At each board state $b$, the current player has a finite set $M_b$ of legal moves; after playing $m:M_b$, the opponent has a set $R_{b,m}$ of legal responses. The polynomial at state $b$ is thus $\sum_{m:M_b}\yon^{R_{b,m}}$, and the child tree at $(m,r)$ is the polynomial tree for the resulting board state. The interface can change quite substantially from move to move: a pawn on $e2$ can advance to $e3$ or $e4$ but can only capture diagonally on $d3$ or $f3$ if an opponent's piece occupies that square; castling is available only when king and rook have not moved and no intermediate square is attacked; a pawn reaching the eighth rank must promote, replacing itself with a new piece and altering the move set for all subsequent positions. The polynomial tree for chess encodes the complete game tree, with the changing polynomial at each node reflecting the changing set of legal moves and responses.\qedhere
\end{example}

The simplest polynomial trees are \emph{constant}: for any $p:\poly_\UU$, the \emph{constant tree} $\ol{\tc{p}}$ is defined coinductively by $\ol{\tc{p}}\proot\coloneqq \code{p}$ and $\ol{\tc{p}}\prest_{i,d}\coloneqq\ol{\tc{p}}$ for all $i$ and $d$, so that $\sem{\ol{\tc{p}}\proot}=p$ at every node. Constant trees model interfaces that never change; in \cref{sec.poly_polytree}, we will show that $p\mapsto\ol{\tc{p}}$ extends to a faithful functor $\poly_\UU\to\polytree$.

\section{The category of polynomial trees}\label{subsec.morphisms}

In this section we define the morphism set $\polytree(\tc{p},\tc{q})$ as a limit of finite approximations, making $\polytree$ into a category with composition defined coinductively.

\begin{definition}\label{def.hom}
For polynomial trees $\tc{p},\tc{q}:\cofree_{\uu\tri\uu}(1)$, define sets $\polytree\coh{n}(\tc{p},\tc{q})$ for $n:\nn$ by $\polytree\coh{0}(\tc{p},\tc{q})\coloneqq 1$ and
\begin{equation}\label{eqn.hom_tower}
\polytree\coh{n+1}(\tc{p},\tc{q})\coloneqq\prod_{i:|\tc{p}\ppos|}\;\sum_{j:|\tc{q}\ppos|}\;\prod_{e:|\tc{q}\pdir_j|}\;\sum_{d:|\tc{p}\pdir_i|}\polytree\coh{n}(\tc{p}\prest_{i,d},\,\tc{q}\prest_{j,e}).
\end{equation}
These form a cofiltered diagram of sets: there is a unique function $\pi_0\colon\polytree\coh{1}(\tc{p},\tc{q})\to 1$ and we define the projection recursively by $\pi_{n+1}\colon\polytree\coh{n+2}(\tc{p},\tc{q})\to\polytree\coh{n+1}(\tc{p},\tc{q})$ by $\prod_i\sum_j\prod_e\sum_d\,\pi_n$. The \emph{morphism set} is taken to be the limit
\[
\polytree(\tc{p},\tc{q})\coloneqq\lim_{n:\nn}\, \polytree\coh{n}(\tc{p},\tc{q}).\qedhere
\]
\end{definition}

Write $h\coloneqq\ihom{\sem{\tc{p}\proot},\sem{\tc{q}\proot}}:\poly$ for the internal hom of the root polynomials; then $h(1)=\poly(\sem{\tc{p}\proot},\sem{\tc{q}\proot})$ is the set of polynomial maps between them, and $h[\varphi]=\sum_{i:|\tc{p}\ppos|}|\tc{q}\pdir_{\varphi_1(i)}|$ is the direction set at a map $\varphi:h(1)$. Since $\polytree\coh{0}=1$, comparing \eqref{eqn.hom_tower} with \eqref{eqn.poly_map} gives $\polytree\coh{1}(\tc{p},\tc{q})=h(1)=\poly(\sem{\tc{p}\proot},\sem{\tc{q}\proot})$. 

In general, an element $\varphi:\polytree\coh{n+1}(\tc{p},\tc{q})$ specifies:
\begin{itemize}
\item a \emph{root map} $\varphi\proot:\poly(\sem{\tc{p}\proot},\sem{\tc{q}\proot})$; together with
\item for each direction $(i,e):h[\varphi\proot]$, letting $j\coloneqq\varphi_1(i)$ and $d\coloneqq\varphi^\#_i(e)$, a \emph{subtree element} $\varphi\prest_{i,e}:\polytree\coh{n}(\tc{p}\prest_{i,d},\,\tc{q}\prest_{j,e})$.
\end{itemize}
Thus a morphism $\varphi:\polytree(\tc{p},\tc{q})$ consists of a root map $\varphi\proot:h(1)$ together with, for each direction of $h$ at $\varphi\proot$, a morphism between the corresponding child trees, and so on along every path.

Recall that a polynomial map $\varphi:\poly(p,q)$ can be understood as an interaction pattern, generalizing a wiring diagram (\cref{rmk.wiring_diagram}): $\varphi$ specifies how outputs and inputs are routed between $p$ and $q$. A morphism $\varphi:\polytree(\tc{p},\tc{q})$ is a tree of such interaction patterns: at the root, $\varphi\proot$ wires the current interface $\sem{\tc{p}\proot}$ to $\sem{\tc{q}\proot}$; then after a round of interaction, the interfaces transition to new polynomial interfaces, and $\varphi$ supplies a new interaction pattern adapted to the new interfaces. The coinductive definition ensures that these interaction patterns are defined along every interaction path. In general, a map $\tc{p}_1\otimes\cdots\otimes\tc{p}_k\to\tc{q}$ can be read as a way of implementing the interface $\tc{q}$ using the interfaces $\tc{p}_1,\ldots,\tc{p}_k$, with the wiring adapting as the interfaces evolve. We will define $\otimes$ for polynomial trees in \cref{subsec.distributive}.

\begin{example}[Protocol refinement]\label{ex.refinement}
Consider the server protocol $\tc{q}$ of \cref{ex.protocol}, with root $\yon^2+1$ ($\const{login}$ or $\const{quit}$). A \emph{read-only} client protocol $\tc{p}$ has the same initial root $\yon^2+1$, but after $\const{login}/\const{success}$, transitions to $\Set{String}\yon^{\Set{String}}+1$ ($\const{query}$ and $\const{logout}$ only), while $\tc{q}$ transitions to $\Set{String}\yon^{\Set{String}}+\Set{String}\yon^{\Set{Int}}+1$ ($\const{query}$, $\const{set}$, and $\const{logout}$). A morphism $\varphi:\polytree(\tc{p},\tc{q})$ embeds the read-only protocol into the full one. At the initial root, $\varphi\proot=\id$. At the authenticated root, $\varphi_1$ sends each query string $s:\Set{String}$ to the same query string in $\tc{q}$ and $\const{logout}\mapsto\const{logout}$, with $\varphi^\#=\id$ on directions at each; the $\const{set}$ summand of $\tc{q}$ is simply not in the image of $\varphi_1$. After $\const{login}/\const{failure}$, both trees return to $\yon^2+1$ and $\varphi$ repeats; after $\const{query}$ with response $s:\Set{String}$, both return to their respective authenticated roots and $\varphi$ repeats. The read-only protocol sits inside the full protocol at every depth, using only a subset of $\tc{q}$'s capabilities.\qedhere
\end{example}

\begin{example}[Maps from and to $\ol{\tc{\yon}}$]\label{ex.global}
Since $\ol{\tc{\yon}}$ has one position and one direction at every node, maps from and to $\ol{\tc{\yon}}$ take a particularly simple form.

A map $\ol{\tc{\yon}}\to\tc{p}$ consists coinductively of a position $j:|\tc{p}\ppos|$ together with, for each direction $e:|\tc{p}\pdir_j|$, a map $\ol{\tc{\yon}}\to\tc{p}\prest_{j,e}$: choose an output, then handle every possible input. Such a map exists iff $\sem{\tc{p}\proot}$ has a nonempty position set, and the same holds at all children reachable by following directions at chosen positions.

A map $\tc{p}\to\ol{\tc{\yon}}$ consists coinductively of, for each position $i:|\tc{p}\ppos|$, a direction $d:|\tc{p}\pdir_i|$ together with a map $\tc{p}\prest_{i,d}\to\ol{\tc{\yon}}$: for each output, choose an input to respond with. Such a map exists iff every position of $\sem{\tc{p}\proot}$ has a nonempty direction set, and the same holds at all children reachable by following chosen directions.

For the chess tree of \cref{ex.chess}, a map $w\colon \ol{\tc{\yon}}\to\tc{chess}$ is a non-losing strategy for White: at each board state, choose a legal move and handle every response. If the chosen move checkmates Black ($R_{b,m}=\emptyset$), the branch terminates (the empty product is $1$). Such a map $w$ exists iff Black cannot force a position where White has no legal moves. Dually, a map $b\colon\tc{chess}\to\ol{\tc{\yon}}$ is a non-losing counter-strategy for Black: for each legal move, choose a response. Such a map $b$ exists iff no reachable move checkmates Black.%
\footnote{To model games that may end in a loss, include a terminal position $\const{WhiteLose}$ (empty direction set) at every node of the $\tc{chess}$ tree, so that $\ol{\tc{\yon}}\to\tc{chess}$ is any White strategy. Let $\tc{o}\coloneqq\ol{\tc{\yon+\{\const{WhiteLose},\const{BlackLose}\}}}$ be the outcome tree. A Black counter-strategy is a map $\tc{chess}\to\tc{o}$ commuting with the inclusions of $\{\const{WhiteLose}\}$ into each side (so Black cannot misattribute a White loss); Black checkmated is sent to $\const{BlackLose}$. The composite $\ol{\tc{\yon}}\to\tc{o}$ is the game's result: each round either continues ($\yon$) or ends in a win.}
\qedhere
\end{example}

\begin{proposition}\label{prop.category}
Polynomial trees and their morphisms form a category $\polytree$.
\end{proposition}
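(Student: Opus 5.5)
We have to supply identities and a composition on $\polytree(\tc{p},\tc{q})=\lim_n\polytree\coh{n}(\tc{p},\tc{q})$, and then check the unit and associativity laws. The plan is to work one truncation level at a time. We define the structure on each $\polytree\coh{n}$ by recursion on $n$ and show it commutes with the projections $\pi_n$, so that it passes to the limit. Each axiom is then proved at level $n$ by induction on $n$. Since the limit injects into the product of its stages, agreement at every level gives equality in $\polytree$.

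\textbf{Identities.} We define $\id\coh{n}_{\tc{p}}:\polytree\coh{n}(\tc{p},\tc{p})$ simultaneously for all trees $\tc{p}$, by recursion on $n$. At level $0$ we take the unique element. At level $n+1$ we send $i\mapsto i$, and for each $d$ we send $d\mapsto d$, with subtree element $\id\coh{n}_{\tc{p}\prest_{i,d}}$. Quantifying over all trees at once is essential, because the recursion calls the identity on child trees. An easy induction shows $\pi_n(\id\coh{n+1})=\id\coh{n}$. Hence these define $\id_{\tc{p}}$ in the limit, and its root map is $\id$ in $\poly$.

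\textbf{Composition.} We define $\then\coh{n}\colon\polytree\coh{n}(\tc{p},\tc{q})\times\polytree\coh{n}(\tc{q},\tc{r})\to\polytree\coh{n}(\tc{p},\tc{r})$, again by recursion on $n$ uniformly in all triples of trees. At level $0$ the map is forced. At level $n+1$, take $\varphi,\psi$ and a position $i$. Let $j\coloneqq\varphi_1(i)$ and $k\coloneqq\psi_1(j)$. For each $f:|\tc{r}\pdir_k|$, let $e\coloneqq\psi^\#_j(f)$ and $d\coloneqq\varphi^\#_i(e)$. The composite sends $i\mapsto k$ and $f\mapsto d$, with subtree element $\varphi\prest_{i,e}\then\coh{n}\psi\prest_{j,f}$. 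This subtree element lies in $\polytree\coh{n}(\tc{p}\prest_{i,d},\tc{r}\prest_{k,f})$, because the $\tc{q}$-indices $(j,e)$ match up: $\varphi\prest_{i,e}$ has codomain $\tc{q}\prest_{j,e}$, and $\psi\prest_{j,f}$ has domain $\tc{q}\prest_{j,e}$ since $e=\psi^\#_j(f)$. Thus the root of the composite is exactly the composite of root maps in $\poly$. We then check by induction that $\pi_n(\varphi\then\coh{n+1}\psi)=\pi_n\varphi\then\coh{n}\pi_n\psi$. This holds because $\pi_{n+1}=\prod\sum\prod\sum\pi_n$ leaves the root data untouched and acts on subtree elements. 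Compatibility with the projections lets composition descend to a function on the limits.

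\textbf{Axioms.} We prove unitality and associativity at each level $n$, by induction on $n$ uniformly in the trees. At level $n+1$ the root components agree because $\poly$ is a category. The subtree components agree by the inductive hypothesis applied to the child trees. For associativity we must check that both bracketings use the same child trees. This holds because both compute the same intermediate indices $j,k$ and $e,f$ from $\varphi,\psi,\chi$. The main obstacle is bookkeeping, not conceptual. It is precisely the dependent typing in the composition step, namely verifying that the child indices line up, $d=\varphi^\#_i(\psi^\#_j(f))$, and that the recursion is genuinely stated for all trees simultaneously so that the induction hypothesis applies to subtrees.
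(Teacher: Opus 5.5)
Your proposal is correct and follows essentially the same route as the paper: you define identities and composition levelwise on the tower $\polytree\coh{n}$, using the same index bookkeeping $j=\varphi_1(i)$, $k=\psi_1(j)$, $e=\psi^\#_j(f)$, $d=\varphi^\#_i(e)$, then prove unitality and associativity level by level by induction and pass to the limit. You are slightly more careful than the paper, since you explicitly check compatibility with the projections $\pi_n$ and require the recursion to be uniform in all pairs of trees, two points the paper leaves implicit.
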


\begin{proof}
We define composition and identity on the tower and pass to the limit.

\emph{Identity.} For $\tc{p}:\cofree_{\uu\tri\uu}(1)$, define $\id_{\tc{p}}\in\polytree(\tc{p},\tc{p})$ by $(\id_{\tc{p}})\proot\coloneqq\id_{\sem{\tc{p}\proot}}$ and $(\id_{\tc{p}})\prest_{i,d}\coloneqq\id_{\tc{p}\prest_{i,d}}$ coinductively.

\emph{Composition.} Given $\varphi:\polytree(\tc{p},\tc{q})$ and $\psi:\polytree(\tc{q},\tc{r})$, we define $\psi\circ\varphi:\polytree(\tc{p},\tc{r})$ by induction. At $\polytree\coh{0}=1$, composition is trivial. At level $n+1$, we must produce an element of
\[
\prod_{i:|\tc{p}\ppos|}\;\sum_{k:|\tc{r}\ppos|}\;\prod_{f:|\tc{r}\pdir_k|}\;\sum_{d:|\tc{p}\pdir_i|}\polytree\coh{n}(\tc{p}\prest_{i,d},\,\tc{r}\prest_{k,f}).
\]
Given $i$, set $j\coloneqq\varphi_1(i)$ and $k\coloneqq\psi_1(j)$. Given $f:|\tc{r}\pdir_k|$, set $e\coloneqq\psi^\#_j(f):|\tc{q}\pdir_j|$ and $d\coloneqq\varphi^\#_i(e):|\tc{p}\pdir_i|$. The recursive component is
\[
\psi\prest_{j,f}\circ\varphi\prest_{i,e}:\polytree\coh{n}(\tc{p}\prest_{i,d},\,\tc{r}\prest_{k,f})
\]
defined by the inductive hypothesis.

Associativity and unitality hold at each level $\polytree\coh{n}$ by induction, with base case $\polytree\coh{0}=1$ trivial. They therefore hold in the limit.\qedhere
\end{proof}

\begin{proposition}\label{prop.constant_hom}
For $p,q:\poly_\UU$, $\polytree(\ol{\tc{p}},\ol{\tc{q}})$ is the terminal $\ihom{p,q}$-coalgebra:
\[\polytree(\ol{\tc{p}},\ol{\tc{q}})\cong\cofree_{\ihom{p,q}}(1).\]
\end{proposition}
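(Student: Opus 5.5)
The plan is to compare the two towers level by level and pass to the limit. By \cref{prop.cofree}, $\cofree_{\ihom{p,q}}(1)=\lim_n p'\coh{n}(1)$, where $p'\coloneqq\ihom{p,q}$ and the $p'\coh{n}$ are the polynomials defined in \eqref{eqn.cofree}. Limits of polynomials are computed positionwise on $1$, since evaluation at $1$ is a right adjoint, or simply by inspecting the construction. On the other side, $\polytree(\ol{\tc{p}},\ol{\tc{q}})=\lim_n\polytree\coh{n}(\ol{\tc{p}},\ol{\tc{q}})$ by \cref{def.hom}. It therefore suffices to build bijections
\[
\alpha_n\colon\polytree\coh{n}(\ol{\tc{p}},\ol{\tc{q}})\cong p'\coh{n}(1)
\]
that commute with the projection maps of the two towers.

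First I will compute $p'\coh{n}(1)$. From $p'\coh{0}=\yon$ we get $p'\coh{0}(1)=1$. From $p'\coh{n+1}=\yon\times(p'\tri p'\coh{n})$ and $\yon(1)=1$ we get
\[
p'\coh{n+1}(1)\cong(p'\tri p'\coh{n})(1)=\sum_{\varphi:\poly(p,q)}\;\prod_{(i,e):p'[\varphi]}p'\coh{n}(1).
\]
Here I use $\ihom{p,q}(1)=\poly(p,q)$ and $p'[\varphi]=\sum_{i:p(1)}q[\varphi_1(i)]$, both from \eqref{eqn.dirichlet_hom}.

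Next I will rewrite the recursion \eqref{eqn.hom_tower} for constant trees. Since every child of $\ol{\tc{p}}$ is $\ol{\tc{p}}$ itself, and likewise for $\ol{\tc{q}}$, we have
\[
\polytree\coh{n+1}(\ol{\tc{p}},\ol{\tc{q}})=\prod_{i:p(1)}\;\sum_{j:q(1)}\;\prod_{e:q[j]}\;\sum_{d:p[i]}\polytree\coh{n}(\ol{\tc{p}},\ol{\tc{q}}).
\]
Write $X_n\coloneqq\polytree\coh{n}(\ol{\tc{p}},\ol{\tc{q}})$. Applying the type-theoretic axiom of choice twice, first to $\prod_e\sum_d$ and then to $\prod_i\sum_j$, turns the right-hand side into
\[
\sum_{\varphi:\poly(p,q)}\;\prod_{i:p(1)}\;\prod_{e:q[\varphi_1(i)]}X_n.
\]
The indexing is exactly as in \eqref{eqn.poly_map}, and the term $X_n$ does not depend on $d$. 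This matches the previous display, so I can define $\alpha_{n+1}\coloneqq\sum_\varphi\prod_{(i,e)}\alpha_n$ by induction on $n$.

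The remaining work is bookkeeping: checking that the $\alpha_n$ commute with the projections. On the $\polytree$ side the projection is $\pi_{n+1}=\prod_i\sum_j\prod_e\sum_d\pi_n$. On the cofree side it is $\yon\times(p'\tri\pi\coh{n})$, which on positions acts as $\sum_\varphi\prod_{(i,e)}\pi\coh{n}$. Both projections act as the identity on the root map $\varphi$ and apply the previous-level projection in each slot, so naturality follows by induction from the trivial base case. This step is where I expect the main obstacle: the choice-isomorphism must be natural in the family $X_n$, and the factor $\yon\times-$ must contribute nothing on positions.

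Once these squares commute, the bijections $\alpha_n$ induce a bijection of the two limits, which proves the claimed isomorphism. In coinductive terms, a morphism $\ol{\tc{p}}\to\ol{\tc{q}}$ is a root map $\varphi:\ihom{p,q}(1)$ together with a morphism $\ol{\tc{p}}\to\ol{\tc{q}}$ for each direction $(i,e):\ihom{p,q}[\varphi]$. That is precisely an $\ihom{p,q}$-behavior tree.
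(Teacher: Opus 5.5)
Your proposal is correct and follows essentially the same route as the paper: both observe that for constant trees the hom tower becomes $X_{n+1}\cong\ihom{p,q}(X_n)$ with $X_0=1$, and both match this against \eqref{eqn.cofree} evaluated at $1$ before passing to the limit. The differences are minor. The paper reads off $\ihom{p,q}(X_n)$ directly from the $\prod\sum\prod\sum\yon$ form in \eqref{eqn.dirichlet_hom}, so it does not need the type-theoretic axiom of choice, and it leaves implicit the compatibility with the projections, which you check explicitly.
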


\begin{proof}
Since $\ol{\tc{p}}\prest_{i,d}=\ol{\tc{p}}$ and $\ol{\tc{q}}\prest_{j,e}=\ol{\tc{q}}$ for all $i,d,j,e$, the recurrence \eqref{eqn.hom_tower} becomes
\[
\polytree\coh{n+1}(\ol{\tc{p}},\ol{\tc{q}})=\prod_{i:p(1)}\;\sum_{j:q(1)}\;\prod_{e:q[j]}\;\sum_{d:p[i]}\polytree\coh{n}(\ol{\tc{p}},\ol{\tc{q}})=\ihom{p,q}\big(\polytree\coh{n}(\ol{\tc{p}},\ol{\tc{q}})\big)
\]
with $\polytree\coh{0}(\ol{\tc{p}},\ol{\tc{q}})=1$. Evaluating \eqref{eqn.cofree} at $1:\smset$ with $\ihom{p,q}$ in place of $p$, and using $\yon(1)=1$, gives the same recurrence, so the limit is $\cofree_{\ihom{p,q}}(1)$.\qedhere
\end{proof}

The category $\polytree$ has finite coproducts. For polynomial trees $\tc{p},\tc{q}:\cofree_{\uu\tri\uu}(1)$, define $\tc{p}+\tc{q}:\cofree_{\uu\tri\uu}(1)$ by
\[
\sem{(\tc{p}+\tc{q})\proot}\coloneqq\sem{\tc{p}\proot}+\sem{\tc{q}\proot}
\]
with $(\tc{p}+\tc{q})\prest_{i,d}\coloneqq\tc{p}\prest_{i,d}$ for $i:|\tc{p}\ppos|$ and $(\tc{p}+\tc{q})\prest_{j,e}\coloneqq\tc{q}\prest_{j,e}$ for $j:|\tc{q}\ppos|$. No coinduction is needed: the children are inherited directly from the summands.

\begin{proposition}\label{prop.coproduct}
$\polytree$ has finite coproducts, with $\ol{\tc{0}}$ initial.
\end{proposition}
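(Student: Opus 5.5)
We prove both claims by computing hom-sets directly from \cref{def.hom}, working level by level in the tower $\polytree\coh{n}$ and passing to the limit. Throughout, we use the explicit description of $\tc{p}+\tc{q}$ given just before the statement.

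\emph{Initial object.} Since $\sem{\ol{\tc{0}}\proot}=0$, we have $|\ol{\tc{0}}\ppos|$ empty. For any $\tc{q}$, the recurrence \eqref{eqn.hom_tower} at level $n+1$ is an empty product, hence equal to $1$. At level $0$ it is $1$ by definition. So every $\polytree\coh{n}(\ol{\tc{0}},\tc{q})$ is a singleton, the projections are the unique maps, and the limit is a singleton. This proves $\ol{\tc{0}}$ is initial.

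\emph{Binary coproducts.} The key step is a levelwise bijection
\[
\polytree\coh{n}(\tc{p}+\tc{q},\tc{r})\cong\polytree\coh{n}(\tc{p},\tc{r})\times\polytree\coh{n}(\tc{q},\tc{r}),
\]
natural in the projections $\pi_n$. At $n=0$ both sides are $1$. At level $n+1$, the position set $|(\tc{p}+\tc{q})\ppos|$ is the disjoint union of $|\tc{p}\ppos|$ and $|\tc{q}\ppos|$. The direction sets are inherited, and so are the children: $(\tc{p}+\tc{q})\prest_{i,d}=\tc{p}\prest_{i,d}$ for $i$ in the first summand, and similarly for the second. Hence the outer $\prod_i$ in \eqref{eqn.hom_tower} splits as a product of two products. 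The remaining factors $\sum_{k}\prod_f\sum_d\polytree\coh{n}(\cdots)$ are literally the ones defining $\polytree\coh{n+1}(\tc{p},\tc{r})$ and $\polytree\coh{n+1}(\tc{q},\tc{r})$. Notably, the inductive hypothesis is not even needed at the inner level, because the children of $\tc{p}+\tc{q}$ are the children of $\tc{p}$ or $\tc{q}$, not sums. The projections $\pi_{n+1}=\prod\sum\prod\sum\pi_n$ visibly respect this splitting. Since limits commute with products, we obtain $\polytree(\tc{p}+\tc{q},\tc{r})\cong\polytree(\tc{p},\tc{r})\times\polytree(\tc{q},\tc{r})$.

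It remains to show this bijection is induced by precomposition with coprojections and is natural in $\tc{r}$. Define $\iota_{\tc{p}}\colon\tc{p}\to\tc{p}+\tc{q}$ with root map the polynomial coproduct inclusion $\sem{\tc{p}\proot}\to\sem{\tc{p}\proot}+\sem{\tc{q}\proot}$. This inclusion is cartesian, with identity on directions, so the child pair at $(i,d)$ is $(\tc{p}\prest_{i,d},\tc{p}\prest_{i,d})$, and we set $(\iota_{\tc{p}})\prest_{i,d}\coloneqq\id$. Define $\iota_{\tc{q}}$ similarly. Unwinding the composition formula from \cref{prop.category}, $\psi\circ\iota_{\tc{p}}$ has root $\psi\proot$ restricted to the $\tc{p}$-summand and children $\psi\prest_{i,e}\circ\id=\psi\prest_{i,e}$. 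So $\psi\circ\iota_{\tc{p}}$ is exactly the first component of the splitting above, and likewise for $\tc{q}$. Naturality in $\tc{r}$ then holds automatically, since $\psi\mapsto(\psi\circ\iota_{\tc{p}},\psi\circ\iota_{\tc{q}})$ is precomposition.

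The only point needing care is matching the composition-with-identity children to the tower splitting. This is routine because composition is defined levelwise and unitality was established in \cref{prop.category}. Finite coproducts follow from the binary ones together with the initial object.
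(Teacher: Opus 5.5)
Your proof is correct and is essentially the paper's argument. The paper defines the same coprojections and a copairing $\copair{\varphi,\psi}$ that restricts subtree data to each summand, asserting uniqueness levelwise; this is exactly the inverse of your levelwise splitting of $\polytree\coh{n}(\tc{p}+\tc{q},\tc{r})$ into $\polytree\coh{n}(\tc{p},\tc{r})\times\polytree\coh{n}(\tc{q},\tc{r})$, and your empty-product argument for $\ol{\tc{0}}$ is identical to the paper's.
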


\begin{proof}
The inclusions $\iota_1\colon\tc{p}\to\tc{p}+\tc{q}$ and $\iota_2\colon\tc{q}\to\tc{p}+\tc{q}$ are defined by the coproduct inclusion at the root and the identity on subtrees. Given $\varphi:\polytree(\tc{p},\tc{r})$ and $\psi:\polytree(\tc{q},\tc{r})$, the copairing $\copair{\varphi,\psi}:\polytree(\tc{p}+\tc{q},\,\tc{r})$ is defined by $\copair{\varphi\proot,\psi\proot}$ at the root, with subtree data restricting to that of $\varphi$ on the first summand and of $\psi$ on the second. Uniqueness holds at each level. For the initial object, $\polytree\coh{n+1}(\ol{\tc{0}},\tc{q})=\prod_{i:\emptyset}\cdots=1$ for all $n$, so $\polytree(\ol{\tc{0}},\tc{q})=1$.\qedhere
\end{proof}

The constant-tree construction from the end of \cref{subsec.polytrees} extends to morphisms: given $\varphi\colon p\to q$ in $\poly_\UU$, define $\ol{\varphi}\colon\ol{\tc{p}}\to\ol{\tc{q}}$ by $\ol{\varphi}\proot\coloneqq\varphi$ and $\ol{\varphi}\prest_{i,e}\coloneqq\ol{\varphi}$ coinductively, applying $\varphi$ at every depth. This defines a faithful functor
\begin{equation}\label{eqn.constant_tree}
\ol{\tc{(-)}}\colon\poly_\UU\to\polytree.
\end{equation}
Indeed, faithfulness is immediate from the retraction $\ol{\varphi}\proot=\varphi$.

\begin{remark}\label{rmk.not_strong}
The functor $\ol{\tc{(-)}}$ does not preserve coproducts. For $p,q:\poly_\UU$, the trees $\ol{\tc{p}}+\ol{\tc{q}}$ and $\ol{\tc{p+q}}$ share the same root polynomial $p+q$, but their children differ: $(\ol{\tc{p}}+\ol{\tc{q}})\prest_{i,d}=\ol{\tc{p}}$ on the $p$-summand and $\ol{\tc{q}}$ on the $q$-summand, while $\ol{\tc{p+q}}\prest_{i,d}=\ol{\tc{p+q}}$ on both summands.
\end{remark}

\section{Monoidal structure}\label{subsec.distributive}

In this section we lift the Dirichlet tensor product from $\poly$ to $\polytree$ and show that it distributes over coproducts. For polynomial trees $\tc{p},\tc{q}:\cofree_{\uu\tri\uu}(1)$, define the \emph{tensor product} $\tc{p}\otimes\tc{q}:\cofree_{\uu\tri\uu}(1)$ coinductively by
\begin{align*}
\sem{(\tc{p}\otimes\tc{q})\proot}&\coloneqq\sem{\tc{p}\proot}\otimes\sem{\tc{q}\proot}\\
(\tc{p}\otimes\tc{q})\prest_{(i,j),(d,e)}&\coloneqq\tc{p}\prest_{i,d}\otimes\tc{q}\prest_{j,e}
\end{align*}
for $i:|\tc{p}\ppos|$, $j:|\tc{q}\ppos|$, $d:|\tc{p}\pdir_i|$, $e:|\tc{q}\pdir_j|$. This is well-defined coinductively: the root code lies in $(\uu\tri\uu)(1)$ by closure of $\UU$ under dependent products, and the children are again pairs of trees, so the construction repeats at every node.

\begin{example}[Organ from stem cells]\label{ex.organ}
Consider three copies of the stem cell $\tc{p}$ from \cref{ex.cell}, each with root $L\yon^{L^3}$. The tensor $\tc{p}^{\otimes 3}$ has root $L^3\yon^{L^9}$: positions $(x_1,x_2,x_3):L^3$ are the cells' output concentrations, and directions $(d_1,d_2,d_3):L^3\times L^3\times L^3$ are the nine receptor signals. 

Model the organ as the constant tree $\tc{q}\coloneqq\ol{\tc{L\yon^L}}$: one aggregate output concentration, one environmental signal, unchanging over time. A morphism $\varphi:\polytree(\tc{p}^{\otimes 3},\tc{q})$ implements this fixed external interface using three evolving cells. At the root, $\varphi\proot$ is a polynomial map $L^3\yon^{L^9}\to L\yon^L$, given by an aggregation $\varphi_1\coloneqq\alpha\colon L^3\to L$ and, for each $(x_1,x_2,x_3):L^3$, a distribution $\varphi^\#_{(x_1,x_2,x_3)}\colon L\to L^9$ routing the environmental signal to the nine receptors. The distribution may depend on the cells' current outputs: for instance, it could preferentially signal cells producing low concentrations. After one round at outputs $(x_1,x_2,x_3)$ with signal $\ell:L$, each cell $k$ receives a triple $d_k:L^3$ determined by $\varphi^\#_{(x_1,x_2,x_3)}(\ell)$ and transitions to $\tc{p}\prest_{x_k,d_k}$, whose root polynomial may differ from $L\yon^{L^3}$ (\cref{ex.cell}). The child tensor $\tc{p}\prest_{x_1,d_1}\otimes\tc{p}\prest_{x_2,d_2}\otimes\tc{p}\prest_{x_3,d_3}$ therefore has a new root polynomial, but $\tc{q}$ remains at $L\yon^L$; the morphism $\varphi$ must supply a new polynomial map at this depth adapted to the changed cellular interface, and so on coinductively.\qedhere
\end{example}

\begin{proposition}\label{prop.monoidal}
$(\polytree,\ol{0},+,\ol{\tc{\yon}},\otimes)$ is a symmetric distributive monoidal category.
\end{proposition}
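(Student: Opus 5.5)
The plan is to build all the structure coinductively: define it at the root by the corresponding structure of $\poly$, then propagate it to child trees by the same recipe. All equations will be checked level-by-level on the towers $\polytree\coh{n}$ of \cref{def.hom}, by induction on $n$ as in the proof of \cref{prop.category}; since each equation holds at every finite level, it holds in the limit.

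The first step is bifunctoriality of $\otimes$. Given $\varphi:\polytree(\tc{p},\tc{p}')$ and $\psi:\polytree(\tc{q},\tc{q}')$, set $(\varphi\otimes\psi)\proot\coloneqq\varphi\proot\otimes\psi\proot$. A direction of $\ihom{\sem{\tc{p}\proot}\otimes\sem{\tc{q}\proot},\sem{\tc{p}'\proot}\otimes\sem{\tc{q}'\proot}}$ at this map is a tuple $((i,j),(e,e'))$. For it, set
\[(\varphi\otimes\psi)\prest_{(i,j),(e,e')}\coloneqq\varphi\prest_{i,e}\otimes\psi\prest_{j,e'}.\]
This has the correct source and target because children of a tensor are tensors of children. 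Preservation of identities and of composition then follows by induction on the level, using functoriality of $\otimes$ in $\poly$ at the root.

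The second step is the coherence data. The unitors, associator and symmetry will be the corresponding isomorphisms of $\poly$ at the root, with children given recursively by the same isomorphisms between the child trees. For example, $\lambda_{\tc{p}}\colon\ol{\tc{\yon}}\otimes\tc{p}\to\tc{p}$ has root $\yon\otimes\sem{\tc{p}\proot}\cong\sem{\tc{p}\proot}$ and children $\lambda_{\tc{p}\prest_{i,d}}$. This works because $(\ol{\tc{\yon}}\otimes\tc{p})\prest_{(\ast,i),(\ast,d)}=\ol{\tc{\yon}}\otimes\tc{p}\prest_{i,d}$, and similarly
\[((\tc{p}\otimes\tc{q})\otimes\tc{r})\prest=(\tc{p}\prest\otimes\tc{q}\prest)\otimes\tc{r}\prest.\]
Their inverses are built the same way. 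Naturality, the pentagon, the triangle and the hexagons hold at level $1$ because they hold in $\poly$, and at level $n+1$ they reduce to the level-$n$ statements for the children.

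The third step, distributivity, I expect to be the main subtlety. Coproducts exist by \cref{prop.coproduct}, but the canonical map $(\tc{p}\otimes\tc{q})+(\tc{p}\otimes\tc{r})\to\tc{p}\otimes(\tc{q}+\tc{r})$ must be shown to be an isomorphism, and \cref{rmk.not_strong} warns that children of sums are delicate. The point to check is that the two trees have literally the same children. At a position $(i,j)$ with $j$ in the $\tc{q}$-summand, the left side has child $\tc{p}\prest_{i,d}\otimes\tc{q}\prest_{j,e}$, and the right side has child $\tc{p}\prest_{i,d}\otimes(\tc{q}+\tc{r})\prest_{j,e}=\tc{p}\prest_{i,d}\otimes\tc{q}\prest_{j,e}$, which is the same tree. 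So the inverse is the $\poly$ distributivity isomorphism at the root with identities on all children, and no coinduction is needed beyond the identities. For the initial object, $\tc{p}\otimes\ol{0}$ has empty root position set, so it has a unique map to and from $\ol{0}$ level-wise by the argument of \cref{prop.coproduct}. One subtlety remains: the roots are equal only up to the code bijection $\code{\cdot}$, not strictly. I would handle this by using the cartesian isomorphisms of $\poly_\UU$ at the root and transporting the children along them.
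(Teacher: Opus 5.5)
Your proposal is correct and follows the paper's proof essentially step for step: you tensor morphisms componentwise with children $\varphi\prest_{i,e}\otimes\psi\prest_{j,e'}$, take the coherence isomorphisms from $(\poly,\yon,\otimes)$ at the root and recurse on child tensors while checking equations level-wise, and obtain distributivity as the root-level $\poly$ isomorphism with identities on the literally equal children. Your treatment of the nullary case $\tc{p}\otimes\ol{\tc{0}}\cong\ol{\tc{0}}$ (via the empty root position set) and of the code bijection at the roots fills in details that the paper leaves implicit.
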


\begin{proof}
We construct the required data by induction on the tower.

\emph{Functoriality of $\otimes$.} Given $\varphi:\polytree(\tc{p},\tc{q})$ and $\varphi':\polytree(\tc{p}',\tc{q}')$, define $\varphi\otimes\varphi':\polytree(\tc{p}\otimes\tc{p}',\,\tc{q}\otimes\tc{q}')$ by induction. At level $n+1$, we must produce an element of
\[
\prod_{(i,i')}\;\sum_{(j,j')}\;\prod_{(e,e')}\;\sum_{(d,d')}\polytree\coh{n}(\tc{p}\prest_{i,d}\otimes\tc{p}'\prest_{i',d'},\;\tc{q}\prest_{j,e}\otimes\tc{q}'\prest_{j',e'}).
\]
Given $(i,i')$, set $(j,j')\coloneqq(\varphi_1(i),\varphi'_1(i'))$. Given $(e,e')$, set $(d,d')\coloneqq(\varphi^\#_i(e),\varphi'^\#_{i'}(e'))$.%
\footnote{This componentwise construction is an instance of the \emph{duoidal} interchange between $\otimes$ and $\tri$: the monoidal structures $(\poly,\yon,\otimes)$ and $(\poly,\yon,\tri)$ form a normal duoidal category \cite{aguiar2010monoidal,booker2011tannaka}, so there is a natural map $(p\tri p')\otimes(q\tri q')\to(p\otimes q)\tri(p'\otimes q')$, which at the level of elements gives exactly the independent pairing used here.} The recursive component is $\varphi\prest_{i,e}\otimes\varphi'\prest_{i',e'}$, defined by the inductive hypothesis.

\emph{Monoidal structure.} The associator, unitors $\ol{\tc{\yon}}\otimes\tc{p}\xrightarrow{\cong}\tc{p}\xleftarrow{\cong}\tc{p}\otimes\ol{\tc{\yon}}$, and braiding $\tc{p}\otimes\tc{q}\xrightarrow{\cong}\tc{q}\otimes\tc{p}$ are defined coinductively: at each root, apply the corresponding isomorphism in $(\poly,\yon,\otimes)$, and note that the children are again tensor products (in swapped order for the braiding), so the construction recurses. Coherence at each level $\polytree\coh{n}$ follows from coherence in $\poly$.

\emph{Distributivity.} At the root, distributivity in $\poly$ gives $\sem{\tc{p}\proot}\otimes(\sem{\tc{q}\proot}+\sem{\tc{r}\proot})\cong(\sem{\tc{p}\proot}\otimes\sem{\tc{q}\proot})+(\sem{\tc{p}\proot}\otimes\sem{\tc{r}\proot})$. Since $+$ acts only at the root---children are inherited from the summands---the children of both sides are already identical: each is $\tc{p}\prest_{i,d}\otimes\tc{q}\prest_{j,e}$ or $\tc{p}\prest_{i,d}\otimes\tc{r}\prest_{k,f}$. So the isomorphism lives entirely at the root level and requires no coinduction.\qedhere
\end{proof}

\begin{corollary}\label{cor.strict_monoidal}
The constant-tree functor \eqref{eqn.constant_tree} is strict monoidal: $\ol{\tc{p}}\otimes\ol{\tc{q}}=\ol{\tc{p\otimes q}}$ for all $p,q:\poly_\UU$, and $\ol{\tc{\yon}}$ is the monoidal unit in both categories.
\end{corollary}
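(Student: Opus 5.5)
The statement asserts an equality of polynomial trees, so the plan is to show that the two trees have the same root code and the same child trees. I will then appeal to the terminal-coalgebra property of $\cofree_{\uu\tri\uu}(1)$ from \cref{subsec.cofree}. The alternative would be to compare all finite truncations in the limit $\lim_i (\uu\tri\uu)\coh{i}$. Elements of the terminal coalgebra are determined by their root and their children, so it suffices to exhibit a bisimulation. Take the relation $R$ on $\cofree_{\uu\tri\uu}(1)$ consisting of the pairs $(\ol{\tc{p}}\otimes\ol{\tc{q}},\,\ol{\tc{p\otimes q}})$ for all $p,q:\poly_\UU$. Show that related trees have equal root codes and that their children are again related (here, in fact, related by the same pair). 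Equivalently, let $S\coloneqq\ob(\poly_\UU)^2$ carry a $(\uu\tri\uu)$-coalgebra structure. Both assignments $(p,q)\mapsto\ol{\tc{p}}\otimes\ol{\tc{q}}$ and $(p,q)\mapsto\ol{\tc{p\otimes q}}$ are then coalgebra maps $S\to\cofree_{\uu\tri\uu}(1)$, and uniqueness forces them to agree.

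\emph{Roots.} By the definition of $\otimes$, $\sem{(\ol{\tc{p}}\otimes\ol{\tc{q}})\proot}=\sem{\code{p}}\otimes\sem{\code{q}}=p\otimes q$. Also, $\sem{\ol{\tc{p\otimes q}}\proot}=\sem{\code{p\otimes q}}=p\otimes q$. Since $\sem{\cdot}$ and $\code{\cdot}$ are inverse bijections, the two root codes coincide. One caveat: I am assuming that the code of the tensor defined in \cref{subsec.distributive} is literally $\code{\sem{\tc{p}\proot}\otimes\sem{\tc{q}\proot}}$, with position set and direction sets given by the canonical $|\cdot|$ of products in $\UU$. This must be checked so that the direction sets are indexed identically, namely by pairs $((i,j),(d,e))$. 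This bookkeeping is the only real obstacle. If the chosen product cardinalities $|I\times J|$ are only canonically isomorphic to $|I|\times|J|$, then the same choice must be used both in defining $\otimes$ on trees and in forming $\code{p\otimes q}$. I would fix this convention explicitly, so that "$=$" is an honest equality and not merely an isomorphism.

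\emph{Children.} For $(i,j)$ and $(d,e)$, the definition gives $(\ol{\tc{p}}\otimes\ol{\tc{q}})\prest_{(i,j),(d,e)}=\ol{\tc{p}}\prest_{i,d}\otimes\ol{\tc{q}}\prest_{j,e}=\ol{\tc{p}}\otimes\ol{\tc{q}}$. On the other side, $\ol{\tc{p\otimes q}}\prest_{(i,j),(d,e)}=\ol{\tc{p\otimes q}}$. The children are therefore again in $R$, and coinduction yields the equality. For the unit, $\ol{\tc{\yon}}$ is the unit of $\polytree$ by \cref{prop.monoidal}, and $\yon$ is the unit of $(\poly_\UU,\otimes)$. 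It remains to check that the functor sends $\yon\mapsto\ol{\tc{\yon}}$, which holds by definition.

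Finally, strictness of the monoidal functor also requires that the functor preserve tensors of morphisms, $\ol{\varphi}\otimes\ol{\varphi'}=\ol{\varphi\otimes\varphi'}$, and send the associator, unitors and braiding to the same. For each of these I would argue levelwise on the tower $\polytree\coh{n}$, by induction on $n$. At the root, both sides are $\varphi\otimes\varphi'$ (respectively, the corresponding coherence isomorphism of $\poly$). At every child, both sides recurse to the same expression. Hence they agree at every level and therefore in the limit.
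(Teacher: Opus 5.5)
Your proposal is correct and takes the same route as the paper. Both trees have root $p\otimes q$, and every child of each is the tree itself, so uniqueness of maps into the terminal $(\uu\tri\uu)$-coalgebra forces equality; your bisimulation, or your coalgebra on $\ob(\poly_\UU)^2$, is just the explicit form of that step. Your extra care is welcome but goes beyond what the paper writes: fixing one convention for product cardinalities so that the root codes agree literally, and the levelwise check that $\ol{\varphi}\otimes\ol{\varphi'}=\ol{\varphi\otimes\varphi'}$ and the coherence maps are preserved, are details the paper's one-line proof leaves implicit.
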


\begin{proof}
Both $\ol{\tc{p}}\otimes\ol{\tc{q}}$ and $\ol{\tc{p\otimes q}}$ have root $p\otimes q$, and every child equals the whole tree: $(\ol{\tc{p}}\otimes\ol{\tc{q}})\prest_{(i,j),(d,e)}=\ol{\tc{p}}\otimes\ol{\tc{q}}$ and $\ol{\tc{p\otimes q}}\prest_{(i,j),(d,e)}=\ol{\tc{p\otimes q}}$. By uniqueness of the terminal coalgebra, they are equal.\qedhere
\end{proof}

\begin{remark}\label{rmk.actegory}
The category $\polytree$ does not have finite products: given morphisms $\alpha:\polytree(\tc{r},\tc{p})$ and $\beta:\polytree(\tc{r},\tc{q})$, a product would require combining them into a morphism to $\tc{p}\times\tc{q}$, but a direction from $\tc{p}$ forces $\tc{r}$ to transition to a child tree about which $\beta$ carries no information. However, $\polytree$ is a $(\poly,1,\times)$-actegory: for $p:\poly_\UU$ and $\tc{q}:\polytree$, define $p\times\tc{q}:\polytree$ coinductively by $\sem{(p\times\tc{q})\proot}\coloneqq p\times\sem{\tc{q}\proot}$, with child tree $(p\times\tc{q})\prest_{(i,j),d}\coloneqq p\times\tc{q}$ for $d:p[i]$ and $(p\times\tc{q})\prest_{(i,j),e}\coloneqq p\times\tc{q}\prest_{j,e}$ for $e:|\tc{q}\pdir_j|$. Since $p$ is static, $p$-directions cause no transition, and the actegory coherences follow coinductively from those of $(\poly,1,\times)$.
\end{remark}

\section{Monoidal closure}\label{subsec.internal_hom}

In this section we show that the Dirichlet tensor product on $\polytree$ is closed, with internal hom defined by the same tower as the morphism set (\cref{def.hom}), but with base case $\yon$ in place of $1$.

For polynomial trees $\tc{p},\tc{q}:\cofree_{\uu\tri\uu}(1)$, define \emph{internal hom} polynomials $\ihom{\tc{p},\tc{q}}\coh{n}$ for $n:\nn$ by $\ihom{\tc{p},\tc{q}}\coh{0}\coloneqq\yon$ and
\begin{equation}\label{eqn.ihom_tower}
\ihom{\tc{p},\tc{q}}\coh{n+1}\coloneqq\prod_{i:|\tc{p}\ppos|}\;\sum_{j:|\tc{q}\ppos|}\;\prod_{e:|\tc{q}\pdir_j|}\;\sum_{d:|\tc{p}\pdir_i|}\ihom{\tc{p}\prest_{i,d},\,\tc{q}\prest_{j,e}}\coh{n}.
\end{equation}
Each $\ihom{\tc{p},\tc{q}}\coh{n}$ is a polynomial with arities in $\UU$, by closure under dependent sums and products. Define $\ihom{\tc{p},\tc{q}}\coloneqq\lim_n\,\ihom{\tc{p},\tc{q}}\coh{n}$.

Comparing \eqref{eqn.ihom_tower} with \eqref{eqn.hom_tower}, the two towers share the same $\prod\sum\prod\sum$ recurrence; they differ only in their base case: $\yon$ vs.\ $1$. Since $\yon(1)=1$ and $\sum,\prod$ of polynomials commute with evaluation at any set, we obtain
\begin{equation}\label{eqn.polytree_ev}
\polytree(\tc{p},\tc{q})\;=\;\ihom{\tc{p},\tc{q}}(1)
\end{equation}
the polynomial-tree analogue of $\poly(p,q)=\ihom{p,q}(1)$. For constant trees, the tower \eqref{eqn.ihom_tower} reduces to the cofree comonad tower \eqref{eqn.cofree} for the fixed polynomial $\ihom{p,q}$, recovering \cref{prop.constant_hom}.

\begin{theorem}\label{thm.closed}
$(\polytree,\ol{\tc{\yon}},\otimes,\ihom{-,-})$ is a monoidal closed category: for all polynomial trees $\tc{p},\tc{q},\tc{r}$,
\[
\polytree(\tc{r}\otimes\tc{p},\,\tc{q})\cong\polytree(\tc{r},\,\ihom{\tc{p},\tc{q}}).
\]
\end{theorem}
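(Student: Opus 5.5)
The plan is to first make $\ihom{\tc{p},\tc{q}}$ into an honest polynomial tree, and then to prove the adjunction bijection level by level on the towers of \cref{def.hom}, passing to the limit at the end. For the first step, I would define the tree $\ihom{\tc{p},\tc{q}}:\cofree_{\uu\tri\uu}(1)$ coinductively:
\begin{itemize}
\item its root polynomial is $\sem{\ihom{\tc{p},\tc{q}}\proot}\coloneqq\ihom{\sem{\tc{p}\proot},\sem{\tc{q}\proot}}$, which lies in $\poly_\UU$ by closure of $\UU$ under dependent sums and products;
\item at a position $\varphi$ and direction $(i,e)$, writing $j\coloneqq\varphi_1(i)$ and $d\coloneqq\varphi^\#_i(e)$, the child is $\ihom{\tc{p},\tc{q}}\prest_{\varphi,(i,e)}\coloneqq\ihom{\tc{p}\prest_{i,d},\tc{q}\prest_{j,e}}$.
\end{itemize}
Unwinding \eqref{eqn.ihom_tower} by the type-theoretic axiom of choice shows that this tree's levelwise truncations agree with the polynomials $\ihom{\tc{p},\tc{q}}\coh{n}$. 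I would record this only as consistency with \eqref{eqn.polytree_ev}; what the theorem actually needs is the tree.

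Next I would construct, by induction on $n$ and simultaneously for all triples of trees, bijections
\[
\polytree\coh{n}(\tc{r}\otimes\tc{p},\tc{q})\cong\polytree\coh{n}(\tc{r},\ihom{\tc{p},\tc{q}}).
\]
The base case is $1\cong 1$. For the inductive step, expanding \eqref{eqn.hom_tower} with the definitions of $\otimes$ and of the tree $\ihom{\tc{p},\tc{q}}$ gives the left side as
\[
\prod_{k}\prod_{i}\sum_{j}\prod_{e}\sum_{c}\sum_{d}\polytree\coh{n}\big(\tc{r}\prest_{k,c}\otimes\tc{p}\prest_{i,d},\tc{q}\prest_{j,e}\big),
\]
where $k$ ranges over positions of $\tc{r}$, $i$ over positions of $\tc{p}$, $c$ over directions of $\tc{r}$ at $k$, and so on. The right side is
\[
\prod_{k}\sum_{\varphi}\prod_{(i,e)}\sum_{c}\polytree\coh{n}\big(\tc{r}\prest_{k,c},\ihom{\tc{p}\prest_{i,d},\tc{q}\prest_{j,e}}\big),
\]
with $\varphi$ ranging over $\poly(\sem{\tc{p}\proot},\sem{\tc{q}\proot})$ and $j,d$ determined by $\varphi$. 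To pass from the first to the second, I would commute $\sum_c$ past $\sum_d$, then apply the type-theoretic axiom of choice twice to pull the $\sum_j$ and $\sum_d$ choices out as the data $\varphi=(\varphi_1,\varphi^\#)$. Finally, the inductive hypothesis applied to the child triple $(\tc{r}\prest_{k,c},\tc{p}\prest_{i,d},\tc{q}\prest_{j,e})$ identifies the remaining factors.

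I would then check that these bijections commute with the projections $\pi_n$. This also goes by induction, since both projections are $\prod\sum\prod\sum$ of the previous projection and the rearrangement is natural. The bijections therefore induce a bijection of limits. Naturality in $\tc{r}$ (and in $\tc{p},\tc{q}$) is checked the same way, level by level, against the explicit descriptions of composition in \cref{prop.category} and of $\otimes$ on maps in \cref{prop.monoidal}.

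The main obstacle is the bookkeeping in the inductive step. The child in the $\ihom{}$-tree depends on $d=\varphi^\#_i(e)$, a choice that on the left side is made inside the $\prod_e$ together with $c$. So the inductive hypothesis must be applied to a family of child triples indexed by the very choices being reorganized. This is exactly why the induction must be stated uniformly over all trees, and why the choice isomorphism has to be applied in the dependent form of the footnote in \cref{subsec.poly} rather than the nondependent one.
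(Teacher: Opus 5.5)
Your proposal is correct and follows essentially the same argument as the paper. Both proceed by levelwise induction, uniformly over all triples: expand both hom towers, apply the type-theoretic axiom of choice twice on the $\ihom{\tc{p},\tc{q}}$ side, commute $\sum_c$ with $\sum_d$, invoke the inductive hypothesis on the child triples, and pass to the limit via compatibility with projections. Your explicit coinductive description of $\ihom{\tc{p},\tc{q}}$ as a polynomial tree (root $\ihom{\sem{\tc{p}\proot},\sem{\tc{q}\proot}}$, children $\ihom{\tc{p}\prest_{i,d},\tc{q}\prest_{j,e}}$) is exactly what the paper uses implicitly when it expands the hom tower ``at $\ihom{\tc{p},\tc{q}}\coh{1}$,'' and it usefully makes the statement typecheck. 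One harmless correction: by \cref{rmk.ordering} that root polynomial lies in $\poly_\UU$ only up to a chosen isomorphism, so the children have to be reindexed along that bijection.
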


\begin{proof}
We show $\polytree\coh{n}(\tc{r}\otimes\tc{p},\,\tc{q})\cong\polytree\coh{n}(\tc{r},\,\ihom{\tc{p},\tc{q}})$ for all $n$ by induction, compatibly with the projections, so the isomorphism passes to the limit.

At $n=0$, both sides are $1$. For the inductive step, write $I\coloneqq|\tc{p}\ppos|$, $J\coloneqq|\tc{q}\ppos|$, $K\coloneqq|\tc{r}\ppos|$, $D_i\coloneqq|\tc{p}\pdir_i|$, $E_j\coloneqq|\tc{q}\pdir_j|$, $F_k\coloneqq|\tc{r}\pdir_k|$. Expanding the hom tower \eqref{eqn.hom_tower} at the root polynomials $\sem{\tc{r}\proot}\otimes\sem{\tc{p}\proot}$ and $\ihom{\tc{p},\tc{q}}\coh{1}$ respectively, and applying the type-theoretic axiom of choice
\begin{equation}\label{eqn.ac}
\prod_{a:A}\sum_{b:B_a}P(a,b)\;\cong\;\sum_{b:\prod_{a:A}B_a}\prod_{a:A}P(a,b(a))
\end{equation}
twice to the latter to recover the $\prod\sum\prod\sum$ structure of $\ihom{\tc{p},\tc{q}}\coh{1}$, yields
\begin{align}\label{eqn.lhs_adj}
\polytree\coh{n+1}(\tc{r}\otimes\tc{p},\,\tc{q})\;&=\;\prod_{k:K}\prod_{i:I}\sum_{j:J}\prod_{e:E_j}\sum_{f:F_k}\sum_{d:D_i}T_{k,i,j,e,f,d}
\\\label{eqn.rhs_adj}
\polytree\coh{n+1}(\tc{r},\,\ihom{\tc{p},\tc{q}})\;&\cong\;\prod_{k:K}\prod_{i:I}\sum_{j:J}\prod_{e:E_j}\sum_{d:D_i}\sum_{f:F_k}T_{k,i,j,e,f,d}
\end{align}
where $T_{k,i,j,e,f,d}\coloneqq\polytree\coh{n}(\tc{r}\prest_{k,f}\otimes\tc{p}\prest_{i,d},\,\tc{q}\prest_{j,e})$. By the inductive hypothesis, $T\cong\polytree\coh{n}(\tc{r}\prest_{k,f},\,\ihom{\tc{p}\prest_{i,d},\,\tc{q}\prest_{j,e}})$. The two expressions differ only by commuting $\sum_{d:D_i}$ and $\sum_{f:F_k}$. Compatibility with projections is immediate.\qedhere
\end{proof}

\begin{remark}\label{rmk.ordering}
The set $\ihom{\tc{p},\tc{q}}\coh{1}(1)=\prod_{i:|I|}\sum_{j:|J|}\prod_{e:|E_j|}|D_i|$ is well-defined, and its cardinality lies in $\UU$ by closure under dependent products and sums. But $\ihom{\tc{p},\tc{q}}\coh{1}(1)$ is not literally of the form $|K|$: it is a dependent product of canonical representatives, not itself a canonical representative. So $\ihom{\tc{p},\tc{q}}\coh{1}$ is not in $\poly_\UU$, only isomorphic to a polynomial in $\poly_\UU$ (\cref{subsec.universe}). This is the same gap that prevents $\uu\tri\uu$ from being a monad: the multiplication would require the type-theoretic axiom of choice \eqref{eqn.ac} to be an equality of sets, not merely a bijection, and there is no such distributive law \cite{zwart2020nogo}. However, the hom tower \eqref{eqn.hom_tower} uses only the underlying sets $\ihom{\tc{p},\tc{q}}\coh{1}(1)$ and $\ihom{\tc{p},\tc{q}}\coh{1}[\varphi]$, not any $\UU$-representation of them, and \eqref{eqn.ac} is a set-level bijection. So the adjunction of \cref{thm.closed} is unaffected.\qedhere
\end{remark}

\chapter{Relationship to $\poly$ and $\org$; the bicategory $\orgtree$}\label{sec.poly_polytree}

An element $\varphi:\polytree(\tc{p},\tc{q})$ specifies a polynomial map at every node along every interaction path. In practice, morphisms are more naturally described by a state set and a transition rule. This is a familiar pattern: for any endofunctor $p\colon\smset\to\smset$, a $p$-coalgebra $(S,\,\beta\colon S\to p(S))$ compactly encodes a family of elements of the terminal $p$-coalgebra $\cofree_p(1)$; each state $s:S$ determines a behavior tree by coinductive unfolding. In the bicategory $\org$ of \cite{shapiro2022dynamic}, this idea is applied to the internal hom polynomial $\ihom{p,q}$: a morphism $p\to q$ in $\org$ is an $\ihom{p,q}$-coalgebra $(S,\,\beta\colon S\to\ihom{p,q}(S))$, specifying for each $s:S$ an action $\fun{act}(s):\poly(p,q)$ and an update $\fun{upd}_s\colon\ihom{p,q}[\fun{act}(s)]\to S$ assigning a successor state to each direction. For instance, modeling gradient descent via a weight-update rule (a coalgebra on the weight space $S$) is far more practical than exhibiting the full tree of all possible training trajectories, which would be an element of $\cofree_{\ihom{p,q}}(1)=\polytree(\ol{\tc{p}},\ol{\tc{q}})$ (\cref{prop.constant_hom}).

In this section, we make the same move for general polynomial trees: we parametrize the hom tower by a state set $S\in\UU$, obtaining for each pair $\tc{p},\tc{q}$ a category $\orgtree(\tc{p},\tc{q}):\smcat$ whose objects are state-based descriptions. The set $\polytree(\tc{p},\tc{q})$ is recovered using a state set of cardinality $S=1$ (\cref{prop.recover_polytree}). For constant trees, $\ihom{p,q}\coalg$ embeds fully faithfully into $\orgtree(\ol{\tc{p}},\ol{\tc{q}})$ as the time-invariant objects (\cref{prop.recover_org}), and the constant-tree functor \eqref{eqn.constant_tree} extends to a $\smcat$-enriched functor.

\section{The parametrized hom}\label{subsec.param_hom}

In this section we generalize the hom tower \eqref{eqn.hom_tower} by introducing a state set $S$, so that morphisms can be described by an action-and-update rule rather than an explicit tree of polynomial maps.

\begin{definition}\label{def.param_hom}
For polynomial trees $\tc{p},\tc{q}:\cofree_{\uu\tri\uu}(1)$ and a set $S\in\UU$, define sets $H_S\coh{n}(\tc{p},\tc{q})$ for $n:\nn$ by $H_S\coh{0}(\tc{p},\tc{q})\coloneqq 1$ and
\begin{equation}\label{eqn.param_tower}
H_S\coh{n+1}(\tc{p},\tc{q})\coloneqq\prod_{s:S}\;\prod_{i:|\tc{p}\ppos|}\;\sum_{j:|\tc{q}\ppos|}\;\prod_{e:|\tc{q}\pdir_j|}\;\sum_{d:|\tc{p}\pdir_i|}\;\sum_{s':S}H_S\coh{n}(\tc{p}\prest_{i,d},\,\tc{q}\prest_{j,e}).
\end{equation}
The \emph{parametrized hom set} is $H_S(\tc{p},\tc{q})\coloneqq\lim_{n:\nn}\, H_S\coh{n}(\tc{p},\tc{q})$.\qedhere
\end{definition}

Comparing with \eqref{eqn.hom_tower}, the parametrized tower wraps the original in $\prod_{s:S}$ and inserts $\sum_{s':S}$ before the recursive term. Write $h\coloneqq\ihom{\sem{\tc{p}\proot},\sem{\tc{q}\proot}}:\poly$ as in \cref{subsec.morphisms}. An element of $H_S\coh{n+1}(\tc{p},\tc{q})$ specifies, for each state $s:S$:
\begin{itemize}
\item an \emph{action} $\fun{act}(s):h(1)=\poly(\sem{\tc{p}\proot},\sem{\tc{q}\proot})$; together with
\item for each direction $(i,e):h[\fun{act}(s)]$, letting $j\coloneqq\fun{act}(s)_1(i)$ and $d\coloneqq\fun{act}(s)^\#_i(e)$, a \emph{successor state} $\fun{upd}(s,i,e):S$ and an element of $H_S\coh{n}(\tc{p}\prest_{i,d},\,\tc{q}\prest_{j,e})$.
\end{itemize}
The successor state records which polynomial map to apply at the next interaction step; the recursive element ensures this structure continues through all child trees.

This structure is \emph{path-dependent}: the child tree pair $(\tc{p}\prest_{i,d},\tc{q}\prest_{j,e})$ varies with each interaction, so the recursive data at each node involves a potentially different polynomial hom than at the node's parent. This is the key difference from $\ihom{p,q}$-coalgebras, where $p$ and $q$ are fixed and the map $S\to[p,q](S)$ is always the same.

\begin{definition}\label{def.orgtree_cat}
For polynomial trees $\tc{p},\tc{q}:\cofree_{\uu\tri\uu}(1)$, the \emph{hom-category} $\orgtree(\tc{p},\tc{q})$ is defined as follows.
\begin{itemize}
\item \emph{Objects}: pairs $(S,\alpha)$ where $S\in\UU$ and $\alpha:H_S(\tc{p},\tc{q})$.
\item \emph{Morphisms} $(S,\alpha)\to(S',\alpha')$: functions $f\colon S\to S'$ such that for all $s:S$,
  \begin{enumerate}[label=(\roman*)]
  \item $\fun{act}'(f(s))=\fun{act}(s)$;
  \item $f(\fun{upd}(s,i,e))=\fun{upd}'(f(s),i,e)$ for all directions $(i,e)$;
  \end{enumerate}
  and the same conditions hold coinductively at all child tree pairs, with $f$ relating the successor states at every node.\qedhere
\end{itemize}
\end{definition}

Composition and identity in $\orgtree(\tc{p},\tc{q})$ are inherited from functions between sets.

\begin{proposition}\label{prop.bicategory}
$\orgtree$ is a bicategory, with composition $(S,\alpha)\then(T,\beta)\coloneqq(S\times T,\;\alpha\then\beta)$ defined by composing actions as in \cref{prop.category} and pairing successor states. The identity on $\tc{p}$ is $(1,\,\id_{\tc{p}})$.
\end{proposition}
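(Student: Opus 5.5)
We build the bicategory data level-by-level on the parametrized tower \eqref{eqn.param_tower}, as in the proofs of \cref{prop.category} and \cref{prop.monoidal}, and then pass to the limit.

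\emph{Horizontal composition on objects.} Given $\alpha:H_S(\tc{p},\tc{q})$ and $\beta:H_T(\tc{q},\tc{r})$, we define $\alpha\then\beta:H_{S\times T}(\tc{p},\tc{r})$ by induction on $n$. At level $n+1$, fix a state $(s,t)$ and a position $i:|\tc{p}\ppos|$. Put $j\coloneqq\fun{act}_\alpha(s)_1(i)$ and $k\coloneqq\fun{act}_\beta(t)_1(j)$. Given $f:|\tc{r}\pdir_k|$, pull back to $e\coloneqq\fun{act}_\beta(t)^\#_j(f)$ and $d\coloneqq\fun{act}_\alpha(s)^\#_i(e)$. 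The successor state is $(\fun{upd}_\alpha(s,i,e),\fun{upd}_\beta(t,j,f))$. The recursive component is the level-$n$ composite of $\alpha\prest$ at $(s,i,e)$ with $\beta\prest$ at $(t,j,f)$, which lies in $H\coh{n}_{S\times T}(\tc{p}\prest_{i,d},\tc{r}\prest_{k,f})$. Since $S\times T\in\UU$ by closure under dependent sums, this is a legitimate object. Compatibility with the projections $\pi_n$ is immediate by induction, exactly as before. The identity is $(1,\id_{\tc{p}})$, with $\id_{\tc{p}}$ read as an element of $H_1$. Here we use that $H_1\coh{n}\cong\polytree\coh{n}$, since $\prod_{s:1}$ and $\sum_{s':1}$ are trivial.

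\emph{Horizontal composition on 2-cells and functoriality.} Given 2-cells $f\colon(S,\alpha)\to(S',\alpha')$ and $g\colon(T,\beta)\to(T',\beta')$, we take $f\times g$. The conditions (i) and (ii) of \cref{def.orgtree_cat} hold for the composite at every node by induction:
\begin{itemize}
\item the composite action depends only on the two actions, which $f$ and $g$ preserve;
\item the paired updates commute with $f\times g$ componentwise;
\item the directions $(i,e)$ and $(j,f)$ computed from the composite action agree on both sides, because the actions agree.
\end{itemize}
Preservation of identities and of vertical composition is then trivial, since these are products of functions.

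\emph{Coherence.} For the associator we take the canonical bijection $(S\times T)\times U\cong S\times(T\times U)$. For the unitors we take $1\times S\cong S\cong S\times 1$. We must check that each bijection is a 2-cell, i.e.\ that it intertwines actions and updates at every node. For actions this reduces, node by node, to strict associativity and unitality of composition of polynomial maps in $\poly$, which is what makes the level-wise argument of \cref{prop.category} work. For updates, both sides compute the same triple of successor states, only re-bracketed. Naturality of these bijections in the 2-cells, together with the pentagon and triangle identities, is inherited from the cartesian monoidal structure of $\smset$: all 2-cells are plain functions, and vertical composition is function composition.

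The main obstacle is the coinductive clause of \cref{def.orgtree_cat}, which says that ``$f$ relates successor states at every node''. To make this precise, we will define the 2-cell condition levelwise: $f$ is a 2-cell iff it is compatible with the $n$-th truncations of $\alpha$ and $\alpha'$ for every $n$. Then every verification above becomes an induction on $n$, with the same shape as the composition recursion. We must also check that the composite's recursive components are genuinely indexed by the child tree pairs $(\tc{p}\prest_{i,d},\tc{r}\prest_{k,f})$, and not by an intermediate $\tc{q}$-child, which would spoil path-dependence. This holds because the intermediate $\tc{q}\prest_{j,e}$ is used only inside the inductive hypothesis, exactly as in \cref{prop.category}.
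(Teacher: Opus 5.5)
Your proposal is correct and takes essentially the same route as the paper. The composite is defined levelwise on the tower, using composed actions, paired successor states, and a recursive component obtained by composing the children of $\alpha$ at $(s,i,e)$ and of $\beta$ at $(t,j,f)$ over $(\tc{p}\prest_{i,d},\tc{r}\prest_{k,f})$; horizontal composition of 2-cells is the product of functions, and every check is an induction on $n$. If anything, you are more careful than the paper, which asserts that associativity and unitality hold ``at each level of the tower'': you correctly observe that they hold only up to the canonical bijections $(S\times T)\times U\cong S\times(T\times U)$ and $1\times S\cong S\cong S\times 1$, that these must be checked to be invertible 2-cells (they are), and that the pentagon and triangle identities are then inherited from $\smset$.
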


\begin{proof}
Given $(S,\alpha):\orgtree(\tc{p},\tc{q})$ and $(T,\beta):\orgtree(\tc{q},\tc{r})$, define $\alpha\then\beta:H_{S\times T}(\tc{p},\tc{r})$ coinductively. For $(s,t):S\times T$ and $i:|\tc{p}\ppos|$, set $j\coloneqq\fun{act}_\alpha(s)_1(i)$ and $k\coloneqq\fun{act}_\beta(t)_1(j)$; for $f:|\tc{r}\pdir_k|$, set $e\coloneqq\fun{act}_\beta(t)^\#_j(f)$ and $d\coloneqq\fun{act}_\alpha(s)^\#_i(e)$. Then
\[
\fun{act}_{\alpha;\beta}(s,t)\coloneqq\fun{act}_\alpha(s)\then\fun{act}_\beta(t),\qquad\fun{upd}_{\alpha;\beta}((s,t),i,f)\coloneqq\big(\fun{upd}_\alpha(s,i,e),\;\fun{upd}_\beta(t,j,f)\big).
\]
The recursive factor at $(\tc{p}\prest_{i,d},\,\tc{r}\prest_{k,f})$ is defined coinductively from the recursive data of $\alpha$ and $\beta$ at the intermediate child trees, following \cref{prop.category}. The identity $(1,\,\id_{\tc{p}})$ applies the identity polynomial map at every node coinductively.

For morphisms, given $\sigma\colon(S,\alpha)\to(S',\alpha')$ and $\tau\colon(T,\beta)\to(T',\beta')$ in the respective hom-categories, $\sigma\times\tau\colon S\times T\to S'\times T'$ intertwines the composite actions: condition~(i) of \cref{def.orgtree_cat} holds because $\fun{act}_{\alpha'}(\sigma(s))=\fun{act}_\alpha(s)$ and $\fun{act}_{\beta'}(\tau(t))=\fun{act}_\beta(t)$, so the composed actions agree; condition~(ii) holds because $\sigma$ and $\tau$ separately intertwine the successor states. The coinductive condition follows from the same argument at child trees. Associativity and unitality hold at each level of the tower by induction, hence in the limit.\qedhere
\end{proof}

We illustrate the bicategory structure with an example from deep learning. Let $t\coloneqq\sum_{x:\mathbb{R}}\yon^{T^*_x\mathbb{R}}$ denote the polynomial encoding a single real-valued channel: positions are real numbers, directions are cotangent vectors (gradient feedback). In \cite{shapiro2022dynamic}, a gradient descender mapping $m$ inputs to $n$ outputs is a $\ihom{t^{\otimes m},t^{\otimes n}}$-coalgebra with state $(M,f,p)$: a parameter dimension $M:\mathbb{N}$, a differentiable function $f\colon\mathbb{R}^{M+m}\to\mathbb{R}^n$, and current parameters $p:\mathbb{R}^M$. The action applies $f(p,-)$ as the forward pass, with backward pass $(Df)^\top$; the update performs gradient descent on $p$. These are $\org$-morphisms: the interface $t$ is fixed throughout.

\begin{example}[Progressive generative architectures]\label{ex.progressive}
In progressive training of generative adversarial networks \cite{karras2018progressive}, the image resolution grows during training: the generator begins producing small, coarsely pixelated images and gradually transitions to larger, higher-resolution ones, learning global structure before fine detail. The output polynomial changes at each resolution step, so the generator's interface evolves in a controlled way during training, exactly what polynomial trees are meant to formalize. Let $N_0<N_1<\cdots<N_L$ be pixel counts at increasing resolutions. The polynomial $t^{\otimes N_l}=\sum_{x:\mathbb{R}^{N_l}}\yon^{T^*_x\mathbb{R}^{N_l}}$ is the interface at resolution $l$: positions are images ($N_l$ real-valued pixel intensities), directions are cotangent vectors carrying gradient feedback.

\emph{The progressive image tree.} Define a polynomial tree $\tc{g}_0:\cofree_{\uu\tri\uu}(1)$ with root $\tc{g}_0\proot\coloneqq\code{t^{\otimes N_0}}$. The direction set $T^*_x\mathbb{R}^{N_0}$ at each image $x:\mathbb{R}^{N_0}$ is partitioned as
\[T^*_x\mathbb{R}^{N_0}=D^{\fun{stay}}_x\sqcup D^{\fun{grow}}_x\]
(e.g.\ $D^{\fun{stay}}_x\coloneqq\{v:\|v\|>\epsilon\}$ for a threshold $\epsilon>0$: large gradients indicate continued learning, small gradients indicate convergence at the current resolution). The child trees branch accordingly:
\[\tc{g}_0\prest_{x,v}\coloneqq\begin{cases}\tc{g}_0&\text{if }v\in D^{\fun{stay}}_x,\\\tc{g}_1&\text{if }v\in D^{\fun{grow}}_x,\end{cases}\]
where $\tc{g}_1$ is defined similarly with root $\code{t^{\otimes N_1}}$, and so on up to $\tc{g}_L\coloneqq\ol{\tc{t}^{\otimes N_L}}$ (constant at the final resolution). The polynomial tree encodes all possible resolution trajectories, and the gradient feedback selects among them.

\emph{The generator as an $\orgtree$-morphism.} Let $k:\mathbb{N}$ be the dimension of the generator's input space (the ``latent space'': a fixed-dimensional space of random vectors from which images are generated). The generator is an object
\[(W_G,\,\alpha_G):\orgtree(\ol{\tc{t}^{\otimes k}},\,\tc{g}_0)\]
in the hom-category from the constant tree $\ol{\tc{t}^{\otimes k}}$ (fixed input interface) to the progressive tree $\tc{g}_0$ (evolving output interface). The state set $W_G:\UU$ consists of the generator's learnable parameters. At resolution level $l$ and state $w:W_G$, the action $\fun{act}_{\alpha_G}(w):\poly(t^{\otimes k},t^{\otimes N_l})$ applies the parameterized function $f_G(w,-):\mathbb{R}^k\to\mathbb{R}^{N_l}$ as the forward pass, with backward pass $(Df_G)^\top$ carrying gradient feedback from the output back to the input. The successor state $\fun{upd}_{\alpha_G}(w,z,v):W_G$ performs gradient \emph{ascent}: $w\mapsto w+\epsilon\,\pi_{W_G}(D_{(w,z)}f_G)^\top\cdot v$, where $z:\mathbb{R}^k$ is the input and $v:T^*_{f_G(w,z)}\mathbb{R}^{N_l}$ is the gradient feedback from the discriminator. The sign flip ($+\epsilon$ rther than $-\epsilon$) reflects the adversarial objective: the generator ascends the discriminator's loss. Whether $v$ falls in $D^{\fun{stay}}$ or $D^{\fun{grow}}$ determines the child tree, hence the next resolution level.

\emph{Composition in $\orgtree$.} A discriminator $(W_D,\alpha_D):\orgtree(\tc{g}_0,\ol{\tc{t}})$ is defined similarly: at resolution $l$ and state $w:W_D$, the action applies $f_D(w,-):\mathbb{R}^{N_l}\to\mathbb{R}$, classifying images as real or fake, with gradient descent update on $W_D$. Composition in $\orgtree$ gives
\[(W_G\times W_D,\;\alpha_G\then\alpha_D):\orgtree(\ol{\tc{t}^{\otimes k}},\,\ol{\tc{t}}).\]
This is the end-to-end system with joint parameter space $W_G\times W_D$: the composed action feeds a latent vector through the generator and then the discriminator, producing a single real output, and both parameter sets update simultaneously via backpropagation. The gradient feedback at the outermost interface $\ol{\tc{t}}$ is provided by the loss function (e.g.\ binary cross-entropy against real data), which acts as the environment.

\emph{Why $\orgtree$, not $\org$.} At resolution $l$, the generator acts as a polynomial map $t^{\otimes k}\to t^{\otimes N_l}$. Since $N_l\neq N_{l'}$ for $l\neq l'$, these are maps between genuinely different polynomials at different nodes of the tree; an $\org$-morphism would require a single fixed hom $\ihom{t^{\otimes k},t^{\otimes N}}$ for all time. After reaching level $L$, the interfaces stabilize and the dynamics reduce to an $\org$-morphism, consistent with \cref{prop.recover_org} below.

\emph{Adaptive vs.\ fixed schedules.} In \cite{karras2018progressive}, resolution growth follows a fixed schedule: train for $N$ steps, then grow. This amounts to using the polynomial tree in which \emph{all} directions lead to the same child tree (the branching is trivial and a counter in the state controls the transition). The direction-dependent formulation above is more adaptive: the gradient feedback itself signals when the current resolution has been learned, triggering the transition to a higher resolution only when the training dynamics warrant it. This could reduce wasted computation at resolutions that have already converged and prevent premature growth before convergence is reached. This example is speculative; we have not implemented it.\qedhere
\end{example}

\section{Recovering $\polytree$ and $\org$}\label{subsec.recover}

In this section we show that $\polytree$ is recovered from the parametrized hom (\cref{def.param_hom}) by setting $S=1$, and that $\org$ embeds fully faithfully into $\orgtree$ via constant trees.

\begin{proposition}\label{prop.recover_polytree}
For all polynomial trees $\tc{p},\tc{q}$, there is an isomorphism
\[H_1(\tc{p},\tc{q})\cong\polytree(\tc{p},\tc{q}).\]
\end{proposition}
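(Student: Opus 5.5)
The plan is to compare the two towers level by level and pass to the limit, exactly as in the proof of \cref{thm.closed}. Fix a one-element set $S=\{\ast\}$, i.e.\ $|1|$. We will construct bijections
\[
u_n\colon H_1\coh{n}(\tc{p},\tc{q})\xrightarrow{\cong}\polytree\coh{n}(\tc{p},\tc{q})
\]
for all $n$ and all pairs of polynomial trees simultaneously, by induction on $n$. We then check that these bijections commute with the projection maps of the two cofiltered diagrams. A compatible family of bijections induces a bijection of limits, which gives $H_1(\tc{p},\tc{q})\cong\polytree(\tc{p},\tc{q})$.

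At $n=0$ both sides are $1$, so $u_0$ is the unique map. For the inductive step, specialize \eqref{eqn.param_tower} to $S=1$. We use two canonical bijections:
\begin{itemize}
\item $\prod_{s:1}X\cong X$, which removes the outer product;
\item $\sum_{s':1}X\cong X$, which removes the inner sum.
\end{itemize}
These give
\[
H_1\coh{n+1}(\tc{p},\tc{q})\cong\prod_{i:|\tc{p}\ppos|}\;\sum_{j:|\tc{q}\ppos|}\;\prod_{e:|\tc{q}\pdir_j|}\;\sum_{d:|\tc{p}\pdir_i|}H_1\coh{n}(\tc{p}\prest_{i,d},\,\tc{q}\prest_{j,e}).
\]
Next apply the inductive bijections $u_n$ at every child pair $(\tc{p}\prest_{i,d},\tc{q}\prest_{j,e})$. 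Dependent sums and products are functorial in their fibres, so this yields a bijection onto the right-hand side of \eqref{eqn.hom_tower}, which is $\polytree\coh{n+1}(\tc{p},\tc{q})$. Define $u_{n+1}$ to be this composite.

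In the language of actions and updates, $u_{n+1}$ sends $\fun{act}(\ast)$ to the root map $\varphi\proot$. It discards the necessarily trivial successor state $\fun{upd}(\ast,i,e)=\ast$ and keeps the recursive subtree data.

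The only point needing care is compatibility with projections, i.e.\ $\pi_n\circ u_{n+1}=u_n\circ\pi_n$. Both projections are defined by the same recipe: $\prod\sum\prod\sum$ applied to the previous projection, wrapped in $\prod_{s}$ and $\sum_{s'}$ on the $H$ side. The unit bijections for $\prod_{s:1}$ and $\sum_{s':1}$ are natural in their argument, so the claim reduces by induction to compatibility at the lower level. The base case holds because every map into $1$ agrees.

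We do not expect a genuine obstacle; the argument is bookkeeping. The one subtlety is that the induction must be stated uniformly over all pairs of trees, since the recursion changes the pair at each step.
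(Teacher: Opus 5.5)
Your proposal is correct and takes the same route as the paper: specialize \eqref{eqn.param_tower} to $S=1$, note that $\prod_{s:1}$ and $\sum_{s':1}$ are trivial so the tower agrees level by level with \eqref{eqn.hom_tower}, and pass to the limit. You are more explicit than the paper in two places, namely that the induction runs uniformly over all pairs of trees and that the level bijections commute with the projections (the paper never writes out the projections of the $H_S$ tower), but the argument is the same.
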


\begin{proof}
When $S=1$, the factors $\prod_{s:1}$ and $\sum_{s':1}$ in \eqref{eqn.param_tower} are trivial, so $H_1\coh{n+1}(\tc{p},\tc{q})=\polytree\coh{n+1}(\tc{p},\tc{q})$ for all $n$, by comparison with \eqref{eqn.hom_tower}.
Passing to the limit gives $H_1(\tc{p},\tc{q})\cong\polytree(\tc{p},\tc{q})$.\qedhere
\end{proof}

Thus the objects of $\orgtree(\tc{p},\tc{q})$ with singleton state set are exactly the morphisms of $\polytree$. Objects with larger state sets give compact, state-based descriptions of the same morphisms: any $(S,\alpha)$ and starting state $s_0:S$ determines a unique $\varphi:\polytree(\tc{p},\tc{q})$ by unfolding coinductively. Under the identification of \cref{prop.recover_polytree}, composition in $\orgtree$ at $S=T=1$ recovers composition in $\polytree$.

\begin{proposition}\label{prop.recover_org}
For polynomials $p,q:\poly_\UU$, there is a fully faithful functor $\ihom{p,q}\coalg\to\orgtree(\ol{\tc{p}},\ol{\tc{q}})$.
\end{proposition}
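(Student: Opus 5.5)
Since $\ol{\tc{p}}\prest_{i,d}=\ol{\tc{p}}$ and $\ol{\tc{q}}\prest_{j,e}=\ol{\tc{q}}$ for all $i,d,j,e$, the recurrence \eqref{eqn.param_tower} for constant trees becomes, exactly as in the proof of \cref{prop.constant_hom},
\[
H_S\coh{n+1}(\ol{\tc{p}},\ol{\tc{q}})=\prod_{s:S}\;\ihom{p,q}\big(S\times H_S\coh{n}(\ol{\tc{p}},\ol{\tc{q}})\big).
\]
So an element of $H_S(\ol{\tc{p}},\ol{\tc{q}})$ assigns to each $s:S$ an action in $\poly(p,q)$ and, for each direction $(i,e)$ of $\ihom{p,q}$ at that action, a successor state together with a further element of $H_S(\ol{\tc{p}},\ol{\tc{q}})$. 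Given an $\ihom{p,q}$-coalgebra $(S,\beta)$ with $S\in\UU$ (I will restrict to, or tacitly assume, state sets in $\UU$), I define $\alpha_\beta:H_S(\ol{\tc{p}},\ol{\tc{q}})$ coinductively by $\fun{act}(s)\coloneqq\beta(s)_1$, $\fun{upd}(s,i,e)\coloneqq\beta(s)^\sharp_{(i,e)}$, and the recursive factor at every direction equal to $\alpha_\beta$ itself. Level by level, $\alpha_\beta\coh{n+1}$ is built from $\beta$ and $\alpha_\beta\coh{n}$, and these are compatible with the projections, so they define an element of the limit. This is the object part of the functor; these are the ``time-invariant'' objects. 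A coalgebra morphism $f\colon(S,\beta)\to(S',\beta')$ satisfies $\beta'\circ f=\ihom{p,q}(f)\circ\beta$. That is precisely conditions (i) and (ii) of \cref{def.orgtree_cat} at the root. Because the recursive data of $\alpha_\beta$ and $\alpha_{\beta'}$ at every child pair is again $\alpha_\beta$ and $\alpha_{\beta'}$, the coinductive condition at all child nodes reduces to the root condition. So I send $f\mapsto f$, and functoriality is immediate because composition in both categories is composition of functions.

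Faithfulness is trivial, since both categories have functions between state sets as morphisms and the functor is the identity on them. For fullness, take a morphism $f\colon(S,\alpha_\beta)\to(S',\alpha_{\beta'})$ in $\orgtree(\ol{\tc{p}},\ol{\tc{q}})$. Its root conditions (i) and (ii) say exactly $\beta'(f(s))=\ihom{p,q}(f)(\beta(s))$: the actions agree, and $f$ commutes with the updates. So $f$ is a coalgebra morphism. The only thing to check here is that the direction sets line up. Condition (ii) quantifies over directions $(i,e)$ of $h[\fun{act}(s)]$, and since $\fun{act}'(f(s))=\fun{act}(s)$ these are the same as the directions at $\fun{act}'(f(s))$, so the two sides of (ii) are comparable.

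The main obstacle is making the phrase ``the same conditions hold coinductively at all child tree pairs'' in \cref{def.orgtree_cat} precise. I would formalize a morphism $f$ as satisfying a tower of conditions $C\coh{n}(f,\alpha,\alpha')$, defined in parallel with the tower \eqref{eqn.param_tower}: $C\coh{0}$ is trivially true, and $C\coh{n+1}$ consists of (i) and (ii) at the root together with $C\coh{n}$ for the recursive factors. For images of coalgebras the recursive factors are again $\alpha_\beta$ and $\alpha_{\beta'}$, so a simple induction on $n$ shows that $C\coh{n}$ holds for all $n$ iff the root condition holds. That gives both well-definedness of the functor on morphisms and fullness.
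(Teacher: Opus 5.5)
Your proposal is correct and takes essentially the same route as the paper. Like the paper, you define the time-invariant element $\alpha_\beta$ by reusing $\beta$ at every node, identify coalgebra morphisms with conditions (i)--(ii) at the root, and use constancy of the child trees to collapse the coinductive condition to the root, which gives both functoriality and fullness. Your explicit tower of conditions $C\coh{n}$ and your caveat that state sets must lie in $\UU$ are sound refinements of points the paper leaves implicit.
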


\begin{proof}
Write $h\coloneqq\ihom{p,q}:\poly$. For constant trees, \eqref{eqn.param_tower} becomes
\begin{equation}\label{eqn.constant_param}
H_S\coh{n+1}(\ol{\tc{p}},\ol{\tc{q}})=\prod_{s:S}\;\prod_{i:p(1)}\;\sum_{j:q(1)}\;\prod_{e:q[j]}\;\sum_{d:p[i]}\;\sum_{s':S}H_S\coh{n}(\ol{\tc{p}},\ol{\tc{q}}).
\end{equation}
A $h$-coalgebra $(S,\,\beta\colon S\to h(S))$ determines an element $h_\beta:H_S(\ol{\tc{p}},\ol{\tc{q}})$ as follows. Since $h=\prod_{i:p(1)}\sum_{j:q(1)}\prod_{e:q[j]}\sum_{d:p[i]}\yon$ by \eqref{eqn.dirichlet_hom}, an element $\beta(s)\in h(S)$ specifies, for each $i:p(1)$, a position $j:q(1)$, and for each direction $e:q[j]$, a direction $d:p[i]$ together with an element $s':S$. The choices of $j$ and $d$ give the action $\fun{act}(s):h(1)$, and the elements $s'$ give the successor states $\fun{upd}(s,-)\colon h[\fun{act}(s)]\to S$. This is exactly the data required by \eqref{eqn.constant_param} at each $s:S$. The recursive factor $H_S\coh{n}$ at each direction is filled by the element being defined, since the child trees are constant: $\ol{\tc{p}}\prest_{i,d}=\ol{\tc{p}}$ and $\ol{\tc{q}}\prest_{j,e}=\ol{\tc{q}}$. This coinductive definition uses the \emph{same} $\beta$ at every node.

A coalgebra morphism $f\colon(S,\beta)\to(S',\beta')$ satisfies $h(f)\circ\beta=\beta'\circ f$, which is precisely conditions~(i)--(ii) of \cref{def.orgtree_cat} for $h_\beta$ and $h_{\beta'}$. Since the child trees are constant, the coinductive condition reduces to this single level. Conversely, any morphism $f\colon(S,h_\beta)\to(S',h_{\beta'})$ in $\orgtree(\ol{\tc{p}},\ol{\tc{q}})$ satisfies (i)--(ii) at the root, giving $h(f)\circ\beta=\beta'\circ f$. Thus the functor is fully faithful.\qedhere
\end{proof}

\begin{remark}\label{rmk.time_varying}
In \eqref{eqn.constant_param}, the recursive factor $H_S\coh{n}$ appears once for each direction at each state, so each direction at each state independently specifies an element of $H_S\coh{n}$. A general object of $\orgtree(\ol{\tc{p}},\ol{\tc{q}})$ may use a \emph{different} map $S\to h(S)=[p,q](S)$ at each node: the action and update chosen after one interaction need not equal those chosen after another. The image of $[p,q]\coalg$ consists of the \emph{time-invariant} objects, using the same $\beta$ at every node. Projection to root-level data gives a retraction $\orgtree(\ol{\tc{p}},\ol{\tc{q}})\to [p,q]\coalg$.\qedhere
\end{remark}

\begin{corollary}\label{cor.org_to_orgtree}
The constant-tree functor \eqref{eqn.constant_tree} extends to a locally fully faithful bifunctor $\org\to\orgtree$: on objects $p\mapsto\ol{\tc{p}}$, and on hom-categories the fully faithful embedding $\ihom{p,q}\coalg\hookrightarrow\orgtree(\ol{\tc{p}},\ol{\tc{q}})$ of \cref{prop.recover_org}.
\end{corollary}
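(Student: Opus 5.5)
The plan is to assemble the bifunctor $F\colon\org\to\orgtree$ out of the data already available, and to reduce each bicategorical coherence to a statement about the constant case. On objects put $F(p)\coloneqq\ol{\tc{p}}$. On hom-categories, $F_{p,q}$ is the fully faithful functor $\ihom{p,q}\coalg\to\orgtree(\ol{\tc{p}},\ol{\tc{q}})$ of \cref{prop.recover_org}, which sends $(S,\beta)$ to $(S,h_\beta)$. That proposition already gives local full faithfulness. What remains is to supply the compositor and unitor 2-cells and to verify their coherence.

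\textbf{Compositor.} In $\org$, $(S,\beta)\then(T,\gamma)$ has state set $S\times T$. Its structure map $S\times T\to\ihom{p,r}(S\times T)$ composes the actions $\fun{act}_\beta(s)\then\fun{act}_\gamma(t)$ and pairs the successor states. In $\orgtree$, $(S,h_\beta)\then(T,h_\gamma)$ has state set $S\times T$ too, and by \cref{prop.bicategory} it has the same action and the same update at the root. Because every child tree is again $\ol{\tc{p}},\ol{\tc{q}},\ol{\tc{r}}$, its recursive data is built from the same $h_\beta$ and $h_\gamma$. It is therefore time-invariant, and it is given by the composite coalgebra at every node.

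I would prove $h_{\beta\then\gamma}=h_\beta\then h_\gamma$ by induction on the tower $H_{S\times T}\coh{n}(\ol{\tc{p}},\ol{\tc{r}})$. At each level both sides have identical root data, and by the inductive hypothesis their recursive components agree. This makes the compositor the identity on $S\times T$, and it is a morphism in the sense of \cref{def.orgtree_cat}.

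\textbf{Unitor.} The unit on $p$ in $\org$ is $(1,\,\text{the coalgebra whose action is }\id_p)$. Its image is $(1,h_{\id})$, which unfolds to $\id_{\ol{\tc{p}}}$ under \cref{prop.recover_polytree}, so the unitor is also the identity.

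\textbf{Naturality and coherence.} Compositor naturality in coalgebra morphisms $f,g$ comes down to the fact that $F$ sends $f\times g$ to $f\times g$. The associator and unitor coherences in both bicategories are built from the canonical bijections $(S\times T)\times U\cong S\times(T\times U)$ and $1\times S\cong S$, and $F$ acts as the identity on underlying functions. Consequently every coherence diagram consists of identical set maps, and it commutes once both sides are known to be well-typed morphisms.

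\textbf{Main obstacle.} I expect the hardest step to be the compositor identity $h_{\beta\then\gamma}=h_\beta\then h_\gamma$ holding on the nose, since the recursive factor of $h_\beta\then h_\gamma$ is defined coinductively through \cref{prop.category}. The approach is to show level by level that both elements of $\lim_n H\coh{n}$ have equal projections. If strict equality breaks down because of some bookkeeping in how the directions $(i,f)$ are indexed, I would fall back on taking the compositor to be the identity function on $S\times T$ and checking conditions (i)–(ii) of \cref{def.orgtree_cat} coinductively. This yields an invertible 2-cell, so $F$ is a pseudofunctor, which still suffices.
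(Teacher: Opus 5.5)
Your proposal is correct and follows the paper's route. Local full faithfulness comes from \cref{prop.recover_org}, and composition and identities are preserved on the nose because the $\org$-composite coalgebra and the $\orgtree$-composite of \cref{prop.bicategory} are given by the same action-composition and state-pairing formula at every node of the constant trees. Your level-by-level induction on $H_{S\times T}\coh{n}(\ol{\tc{p}},\ol{\tc{r}})$, together with your observation that all coherence cells are the same canonical bijections, makes explicit what the paper compresses into ``both are defined by the same formula.''
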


\begin{proof}
We must check compatibility with composition. Given an $\ihom{p,q}$-coalgebra $(S,\beta)$ and an $\ihom{q,r}$-coalgebra $(T,\gamma)$, composition in $\org$ yields an $\ihom{p,r}$-coalgebra on $S\times T$ by composing actions and pairing successor states. The image of this coalgebra in $\orgtree(\ol{\tc{p}},\ol{\tc{r}})$ equals the composite of the images of $(S,\beta)$ and $(T,\gamma)$ under \cref{prop.bicategory}, since both are defined by the same formula. Compatibility with identities is immediate.\qedhere
\end{proof}

\begin{remark}[Enrichment in $\orgtree$]\label{rmk.enrichment}
Since $\orgtree$ is a bicategory (\cref{prop.bicategory}), enrichment in $\orgtree$ is a standard notion \cite{kelly1982basic}: an $\orgtree$-enriched category $\cat{C}$ consists of a collection of objects, each assigned a polynomial tree $\tc{p}_X:\cofree_{\uu\tri\uu}(1)$, together with hom-objects $(S_{X,Y},\,\alpha_{X,Y}):\orgtree(\tc{p}_X,\tc{p}_Y)$ for each pair, and composition and identity morphisms in the appropriate hom-categories, satisfying associativity and unitality. In such a structure, both the interaction pattern between objects (governed by the state-dependent actions $\fun{act}$) and the interfaces of the objects themselves (governed by the polynomial trees) co-evolve with the interaction. When all objects are assigned constant trees, each hom-category $\orgtree(\ol{\tc{p}},\ol{\tc{q}})$ still contains $\ihom{p,q}\coalg$ as the time-invariant objects (\cref{prop.recover_org,rmk.time_varying}), but also includes time-varying morphisms that use different actions at different nodes. Thus $\orgtree$-enrichment is strictly more general than the $\org$-enrichment of \cite{shapiro2022dynamic}, even for constant trees. Finding relevant examples of $\orgtree$-enriched categories, operads, and monoidal categories is left for future work.
\end{remark}

\printbibliography

\end{document}